\newtheorem{theorem}{Theorem}
\newtheorem{lemma}{Lemma}
\newtheorem{assumption}{Assumption}
\newtheorem{lemmastar}[lemma]{{*}Lemma}
\newtheorem*{acknowledgement}{Acknowledgement}
\newenvironment{definition}[1][Definition]{\begin{trivlist}
  \item[\hskip \labelsep {\bfseries #1}]}{\end{trivlist}}
\title{Index-Based Policy for Risk-Averse Multi-Armed Bandit}
\date{\today}
\begin{document}
\begin{titlepage}\thispagestyle{empty}
\maketitle
\noindent\textbf{Authors}
\vspace{3 ex}

\noindent Jianyu Xu: Department of Industrial Systems Engineering and Management, National University
of Singapore, 1 Engineering Drive 2, Singapore.

\noindent William B. Haskell: Department of Industrial Systems Engineering and Management, National University
of Singapore, 1 Engineering Drive 2, Singapore.

\noindent Zhisheng Ye: Department of Industrial Systems Engineering and Management, National University
of Singapore, 1 Engineering Drive 2, Singapore.
\vspace{6 ex}

\noindent\textbf{Corresponding author}
\vspace{3 ex}

\noindent William B. Haskell: wbhaskell@gmail.com.

\end{titlepage}
\begin{abstract}
The multi-armed bandit (MAB) is a classical online optimization model
for the trade-off between exploration and exploitation. The traditional
MAB is concerned with finding the arm that minimizes the mean cost.
However, minimizing the mean does not take the risk of the problem into
account. We now want to accommodate risk-averse decision makers.
In this work, we introduce a coherent risk measure as the criterion to form a risk-averse MAB.
In particular, we derive an index-based online sampling framework for the risk-averse MAB.
We develop this framework in detail for three specific risk measures, i.e.
the conditional value-at-risk, the mean-deviation and the shortfall
risk measures. Under each risk measure, the convergence rate for the upper bound on the
pseudo regret, defined as the difference between the expectation of
the empirical risk based on the observation sequence and the
true risk of the optimal arm, is established.
\end{abstract}

\noindent \textbf{Keywords:} Stochastic programming, multi-armed bandit, online optimization, coherent risk, index policy.

\section{Introduction}

The multi-armed bandit (MAB) is a classical problem named after an imaginary slot machine with multiple arms \citep{robbins1985some}.
At each time step, a player pulls one of the arms and receives a random cost associated with the chosen arm.
A policy, or a strategy for the player, denoted by $\varphi$, is an algorithm to choose the next arm based on the past choices and the observed cost sequence. Given a sequence of pulls from the arms,
the pseudo regret, which measures the performance of the policy, is defined as the difference between the expectation of the average cost after $n$ time steps and the lowest mean cost among the arms.
The player's objective is to design a policy that minimizes the pseudo regret either asymptotically \citep{lai1985asymptotically} or uniformly over time \citep{auer2002finite}.
The above basic MAB has many variations depending on the application.
Recent work on this topic has led to many important theoretical results \citep{agrawal1988asymptotically,bubeck2012regret} as well as interesting applications
\citep{jennison1999group,mohri2014optimal,scott2015multi}. The asymptotically optimal policy is usually obtained by the index-based policy proposed in \cite{lai1985asymptotically},
and extended by \cite{agrawal1995sample}, \cite{auer2002finite} and \cite{kleinberg2005nearly}. An index-based policy calculates an index for each arm at each time step. An index consists of two parts, the empirical estimation of the mean and another term related to the probability confidence bound from the estimation. The arm with the lowest current index is chosen each time.

The classical MAB defines the pseudo regret in terms of the long-term average cost, and thus it is risk-neutral.
The risk-neutral MAB does not take the variance of the random cost of each arm into consideration.
As a result, it is inappropriate in many applications which require reliability guarantees.
In clinical trials, for example, a specific treatment with a low average side effect can be highly variable among different
patients and thus it may cause serious medical problems for an individual.
These applications require a criterion that takes the variation/spread/dispersion of the arms into account.
It is thus natural to select a risk measure and then to try to choose the arm with the lowest risk.
Some popular risk measures include value-at-risk \citep{benati2007mixed}, conditional value-at-risk (CVaR) \citep{rockafellar2000optimization} and expected shortfall \citep{acerbi2002expected}. Risk measures have been extensively studied in decision analysis \citep{follmer2002convex,brown2009satisficing,brown2012aspirational}, reinforcement
learning \citep{mannor2011mean,shen2013risk} and operations research \citep{ruszczynski2006optimization,liu2017distributionally}.

The risk-averse MAB has attracted significant attention in the recent literature.
\cite{sani2012risk} uses the mean-variance risk measure to assess the performance of each arm.
They proposed an index-based policy and proved a sub-linear upper
bound for the pseudo regret. \cite{maillard2013robust} extends the mean-variance measure in \cite{sani2012risk} to a more general risk measure that takes the tail of the cumulative distribution function (CDF) into consideration. \cite{galichet2013exploration} introduces the conditional
value-at-risk to MAB. However, their theoretical analysis only
focuses on the special case where the CVaR degenerates to the essential
infimum. These studies adopt the index-based policy inherited from
the classical risk-neutral MAB. The risk measures involved in existing work are limited and many popular risk measures are not included. So far, there is a lack of research on risk-averse MAB and
there is not yet a consensus on the notion of pseudo regret. In the risk-neutral MAB, the loss of a single choice is measured as the difference between the means of the chosen arm and the optimal arm. The total regret is the summation of the losses of each single choice in a sequence. The reason is because the total cost is the summation of the cost drawn each time. So, the total regret is also additive in terms of each single loss. However, when we use a risk measure as the criterion in the risk-averse MAB, the property of additivity no longer holds. This leads to the requirement of a rational pseudo regret for the risk-averse MAB.

In this work, we formalize the risk-averse MAB by generalizing the classical risk-neutral MAB.
We introduce a general coherent risk measure in the MAB as the criterion for the optimal arm.
We define the pseudo regret as the expectation of the difference between the empirical risk measure based on the observation sequence and the true risk measure of the optimal arm.
Further, we conclude that by using our notion of pseudo regret as the target function for optimization, we are still correctly searching for the single optimal arm. An index-based policy is then constructed to find the optimal risk-averse arm.
To prove the convergence of the pseudo regret under the proposed policy, we restrict the general risk measure to three specific cases: that is, the CVaR, the mean-deviation (MD) and the shortfall risk measure.
These risk measures have not yet been fully investigated for the risk-averse MAB.
To show the performance of the policy in the long run, we present a detailed theoretical analysis of the
convergence rate of the pseudo regret for each of these three risk measures.

This paper is organized as follows. Section \ref{Section Risk-Averse MAB} formalizes the risk-averse MAB by using a general coherent risk measure as the decision maker's criterion for arm selection. We define the pseudo regret and then construct an index-based policy for a general risk-averse MAB. In Section \ref{Main results}, we introduce our three specific risk measures of interest. Then, we present the main results of this work; that is,
the expression of the index and the corresponding convergence rate of the pseudo regret for each of these three risk measures. Section \ref{Proof of the main results} provides the detailed proofs of our main results. Some concluding remarks are given in Section \ref{Concluding remarks}.

\section{Risk-Averse MAB}\label{Section Risk-Averse MAB}

Consider an MAB with $K$ arms in total.
Each pull of an arm $k$, $k=1,\ldots,K$,
generates a realization of a nonnegative random cost $X_{k}$ with
mean $\mu_{k}\triangleq\mathbf{E}(X_{k})$ and cumulative distribution function $F_{k}$. In the remainder of this work, we suppose all $X_{k}$'s
are essentially bounded by $M$, i.e. $\mathbf{P}\{ X_{k}\in[0,\:M]\} =1$
for all $k=1,\ldots,K$. This assumption is commonly used in the MAB literature,
see \cite{sani2012risk} and \cite{maillard2013robust} among others.
We further suppose that successive pulls from an arm yield a sequence of i.i.d.
random costs and each pull does not change the distributions of the $K$ arms.

We operate on a finite time horizon of $n$ time steps.
A policy $\varphi$ generates a sequence of choices $\{I_{t}\}_{t=1}^{n}$ where at each time $t\in\{1,\ldots,n\}$, arm $I_{t}\in\{ 1,\ldots,K\}$ is chosen. Let the number of observations from arm $k$ up to time $n$ be $T_{k}(n)$, i.e. $T_{k}(n)\triangleq\sum_{t=1}^{n}I\{I_{t}=k\}$,
where $I\{\cdot\}$ is the indicator function. The risk-neutral
MAB problem seeks the arm $k^{*}$ with the lowest mean
cost, i.e. $k^{*}\triangleq\mathrm{argmin}_{k=1,\ldots, K}\mu_{k}$. Equivalently,
it attempts to minimize the following pseudo regret \citep{lai1985asymptotically}
\begin{equation}\label{eq:neutral mab}
\max_{k=1,\ldots,K}\left\{ \mathbf{E}(\frac{1}{n}\sum_{t=1}^{n}x_{I_{t},\,t})-\mu_{k}\right\} =\frac{1}{n}\sum_{k=1}^{K}\mathbf{E}T_{k}\left(n\right)\left(\mu_{k}-\mu_{k^{*}}\right).
\end{equation}

\subsection{Risk-averse MAB formulation}\label{Risk-averse MAB formulation}

To formalize the risk-averse MAB, we introduce a coherent risk-measure $\rho$ as the decision maker's objective instead of the expectation.
A coherent risk measure is defined as follows.
\begin{definition}\label{def" risk measure}
\citep{ruszczynski2006optimization} Let $\mathcal{L}$ be a space
of essentially bounded random variables. A risk measure $\rho:\:\mathcal{L}\rightarrow\mathbb{R}$
is called a coherent risk measure if it satisfies:
\begin{itemize}
\item[(1.1)] Convexity: For all $X,\:Y\in\mathcal{L}$ and $\forall\lambda\in\left[0,\:1\right]$,
$\rho\left[\lambda X+\left(1-\lambda\right)Y\right]\leq\lambda\rho\left(X\right)+\left(1-\lambda\right)\rho\left(Y\right)$;

\item[(1.2)] Monotonicity: If $X,\:Y\in\mathcal{L}$ and $X\leq Y$, then
$\rho\left(X\right)\leq\rho\left(Y\right)$;

\item[(1.3)] Translation equivalence: If $\alpha\in\mathcal{\mathbb{R}}$
and $X\in\mathcal{L}$, then $\rho\left(X+\alpha\right)=\rho\left(X\right)+\alpha$;

\item[(1.4)] Positive homogeneity: If $\beta>0$ and $X\in\mathcal{L}$,
then $\rho\left(\beta X\right)=\beta\rho\left(X\right)$.

\end{itemize}
\end{definition}

Let $\rho_{k} \triangleq \rho (X_{k})$ denote the risk of arm $k$. In the risk-averse MAB, we define the optimal
arm to be the one with the lowest risk, i.e. $k^{*}\triangleq\mathrm{argmin}_{k=1,\ldots, K}\rho_{k}$.
We assume that the optimal arm $k^{*}$ is always unique. We call arm $k$ a sub-optimal arm if $k \neq k^{*}$.
This assumption is reasonable in practice, as it is uncommon for risk measures for two different populations to coincide. Three specific risk measures will be introduced and studied in Section \ref{Main results}.

Our first objective is to extend the definition of the pseudo regret
from the risk-neutral MAB to the risk-averse MAB. From \eqref{eq:neutral mab}, we note that the risk-neutral MAB aims to minimize the expectation of the difference between the empirical mean of the whole observation sequence and the mean of the optimal arm.
Similarly, in the risk-averse case, we can let $\widehat{\rho}_{\varphi,\,n}$ be the empirical risk based on $\{x_{I_{t},\,t}\}_{t=1}^{n}$.
Then, we can use the expectation of the difference between $\widehat{\rho}_{\varphi,\,n}$ and $\rho_{k^{^{*}}}$  as the pseudo regret. A formal definition of the pseudo regret is given below.
\begin{definition}
The pseudo regret of any given policy $\varphi$ under a risk measure $\rho$
is defined as
\begin{equation}\label{eq:pseudoregret}
R_{n}(\varphi)\triangleq\mathbf{E}\widehat{\rho}_{\varphi,\,n}-\rho_{k^{^{*}}}.
\end{equation}
\end{definition}
Minimizing $R_{n}(\varphi)$ yields the arm with the lowest risk asymptotically.
Throughout this work, we call $\widehat{\rho}_{\varphi,\,n}$ the empirical risk measure of the policy $\varphi$.

\subsection{Algorithm for the risk-averse MAB}\label{Index policy for risk-averse MAB}

In this section, we present our main algorithm for the risk-averse MAB.
The main idea of the algorithm is based on the notion of the lower probability confidence bound.
Therefore, we call it the Risk-Averse Lower Confidence Bound (RA-LCB) algorithm.
To initialize, the algorithm chooses each arm once. This ensures that each term $T_{k}(n)$ is larger than 0 after initialization. The calculation of the index for each arm after initialization relies on the condition $T_{k}(n)>0$. At each time $n>K$, the algorithm calculates an index for each arm $k$, $k=1,\ldots,K$, which is the difference between two terms $\widehat{\rho}_{k,\,n}$ and $\varepsilon_{\rho}(\cdot)$. The first term is the empirical estimate of $\rho_{k}$. The second term $\varepsilon_{\rho}(\cdot)$ is a function of $n$, $T_{k}(n)$, $K$ and the confidence level $\delta\in (0,\,1)$. It is related to the probability confidence bound of the empirical estimate $\widehat{\rho}_{k,\,n}$. This second term ensures that the true risk of an arm falls above its index with an overwhelming probability as $n$ becomes larger. This is the basis of the convergence proof for the pseudo regret. Let $S_{k}(n)\triangleq\{t:I_t=k,t\leq n\}$ be the set of all times at which arm $k$ is chosen. Then $X_{k,\,t}$, $t\in S_{k}(n)$ is the observation sequence from arm $k$. Both terms in the index are functions of the sequence $X_{k,\,t}$, $t\in S_{k}(n)$. The specific forms of these two terms vary under different risk measures. We will specify both terms under the three risk measures that we consider in the sections below. As in the risk-neutral case, at each time step, the arm with the lowest index is chosen. A summary of the main flow of our procedure is given in Algorithm 1. Throughout the rest of the paper, we fix the confidence level $\delta\in(0,\,1)$.

\begin{algorithm}\label{algorithm 1}
\caption{Risk-Averse Lower Confidence Bounds (RA-LCB)}
\textbf{Input:} constant $\delta$, $K$, a confidence bound function $\varepsilon_{\rho}(\cdot)$.\\
\textbf{Initialization}: Choose each arm once in the first $K$ pulls.\\
\textbf{while} $n>K$

\quad \textbf{for} $k=1:K$ \textbf{do}
\begin{enumerate}
\item Calculate $\widehat{\rho}_{k,\,n}$ based on $X_{k,\,t}$, $t\in S_{k}(n)$.
\item Calculate the index for each arm:
$B_{k,\,n} \triangleq \widehat{\rho}_{k,\,n}-\varepsilon_{\rho}(n,\,T_k(n),\,K,\delta)$.
\end{enumerate}
\quad \textbf{end for}\\
$I_{n}=\mathop{\arg\min}_{k}B_{k,\,n}$.\\
$n \Leftarrow n+1$.\\
pull arm $I_n$ at time $n$, $n=1,\,2,\cdots$.\\
\textbf{end while}
\end{algorithm}

\section{Main results}\label{Main results}

This section presents our main results. We apply Algorithm 1 to three specific risk measures: CVaR, MD and shortfall. For each risk measure, we derive the corresponding upper bound for its pseudo regret. We need the following condition when we establish the convergence rate of the pseudo regret for the three risk measures. The explicit form of $\varepsilon_{\rho}(\cdot)$ under each risk measure is given below. In Section \ref{Proof of the main results}, we will show that each specific $\varepsilon_{\rho}(\cdot)$ we give in this section satisfies this condition though the procedure of proof.

\noindent \textbf{Condition 1} For all $k=1,\ldots,K$, the function $\varepsilon_{\rho}(\cdot)$ satisfies
\begin{equation*}
\mathbf{P}\left\{ \left|\widehat{\rho}_{k,\,n}-\rho_{k}\right|\geq\varepsilon_{\rho}(n,\,T_{k}(n),\,K,\delta)\right\} \leq\frac{C\delta}{n^{2}K},
\end{equation*}
where $C$ is a constant.

The construction of $\varepsilon_{\rho}(\cdot)$ under each specific risk measure is based on the various forms of concentration results for $|\widehat{\rho}_{k,\,n}-\rho_{k}|$. Specifically, $\varepsilon_{\rho}(\cdot)$ is directly related to the confidence bounds derived from certain concentration inequalities. If the empirical risk $\widehat{\rho}_{k,\,n}$ is a summation of different terms, we derive the confidence bound on each term and use the summation of these bounds to achieve $\varepsilon_{\rho}(\cdot)$. Under Condition 1, the convergence rate of the pseudo regret for each risk measure is of the order $O(\sqrt{\log n / n})$. The upper bound for the pseudo regret under each risk measure is correspondingly given in Section 3.1-3.3. In Section 3.4, we make some discussion on the order $O(\sqrt{\log n / n})$ of the convergence rate.

\subsection{Conditional value-at-risk}\label{CVaR risk measure}

We first define CVaR as follows.
\begin{definition}
\textbf{(CVaR risk measure)} Consider a random variable $Y$ with CDF $F_Y$. For a fixed level $\alpha\in[0,\:1]$, let $F_{Y}^{-1}(\alpha)\triangleq\inf\{x:\:F_{X}(x)\geq\alpha\}$ be the $\alpha$-quantile of $Y$. Then, the CVaR at level $\alpha$ of arm $k$ is $\mathbf{E}[X_{k}\,|\,X_{k}\geq F_{k}^{-1}(\alpha)]$, i.e.
\begin{equation*}
\rho_{k}^{C} \triangleq (1-\alpha)^{-1}\int_{\alpha}^{1}F_{k}^{-1}(\tau)d\tau.
\end{equation*}
\end{definition}
When $\alpha=0$, CVaR becomes expectation. We need the following assumption for CVaR which ensures the convergence of the empirical estimate of the risk towards its actual value. Lemma \ref{lemma 1} below gives a convenient representation of CVaR and also relies on this assumption.
\begin{assumption}\label{CVaR assumption 1}
For all $k=1,\ldots,K$, $F_{k}$ is continuously differentiable on $(0,\:M)$ with corresponding density function $f_{k}$
and $[f_{k}(F_{k}^{-1}(\alpha)]^{-1}$, $k=1,\ldots,K$ is uniformly upper bounded by a constant $m(\alpha)$, $\forall\alpha\in(0,\;1)$.
\end{assumption}
This common assumption also appears in some statistical literature \citep{bahadur1966note,arcones1996bahadur}.
In this work, we use a convenient representation of CVaR \citep{rockafellar2000optimization} given as
\begin{equation}\label{eq:CVaRi}
\rho_{k}^{C}=\inf_{\eta\in\mathbb{R}}\{\eta+(1-\alpha)^{-1}\mathbf{E}[(X_{k}-\eta)_{+}]\}.
\end{equation}
As $\{\eta+(1-\alpha)^{-1}\mathbf{E}[(X_{k}-\eta)_{+}]\}$
is convex in $\eta$, the range of $\eta$ can be restricted to the
support of $F_{k}$, as shown in the following lemma.
\begin{lemma}\label{lemma 1}
\textup{[\citep{rockafellar2000optimization} Theorem 1]} Consider a random
variable $X$ with a continuous CDF $F_{X}$ and bounded support $[0,\:M]$. Let
\begin{equation*}
g\left(\eta\right)\triangleq\eta+(1-\alpha)^{-1}\mathbf{E}[(X-\eta)_{+}],
\end{equation*}
then $g\left(\eta\right)$ reaches its global minimal at $\eta=F_{X}^{-1}(\alpha)$. Further,
the CVaR of $X$ is
\begin{equation*}
F_{X}^{-1}(\alpha)+(1-\alpha)^{-1}\mathbf{E}\left[(X-F_{X}^{-1}(\alpha))_{+}\right].
\end{equation*}
\end{lemma}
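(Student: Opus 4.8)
The plan is to verify the two claims directly from the convexity of $g$ and the first-order optimality condition, using only elementary properties of the CDF. First I would observe that $g(\eta) = \eta + (1-\alpha)^{-1} \mathbf{E}[(X-\eta)_+]$ is finite for all $\eta \in \mathbb{R}$ because $X$ has bounded support, and that it is convex in $\eta$ since $(x-\eta)_+$ is convex in $\eta$ for each fixed $x$ and expectation preserves convexity. Consequently, a global minimizer is characterized by the condition that $0$ lies in the subdifferential $\partial g(\eta)$.

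The main computational step is to differentiate $g$. Since $X$ has a continuous CDF $F_X$, I would show that $\mathbf{E}[(X-\eta)_+] = \int_\eta^\infty (1 - F_X(x))\, dx$ (integration by parts, valid because the support is $[0,M]$), and hence that $\eta \mapsto \mathbf{E}[(X-\eta)_+]$ is differentiable with derivative $-(1 - F_X(\eta)) = -\mathbf{P}\{X > \eta\} = -\mathbf{P}\{X \geq \eta\}$, where the last equality uses continuity of $F_X$. Therefore
\begin{equation*}
g'(\eta) = 1 - (1-\alpha)^{-1}\,\mathbf{P}\{X \geq \eta\} = 1 - (1-\alpha)^{-1}(1 - F_X(\eta)).
\end{equation*}
Setting $g'(\eta) = 0$ gives $F_X(\eta) = \alpha$, and by the definition of the quantile together with continuity of $F_X$, $\eta = F_X^{-1}(\alpha)$ solves this equation. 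By convexity this stationary point is a global minimizer, which proves the first claim.

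For the second claim I would simply substitute $\eta = F_X^{-1}(\alpha)$ into $g$: by \eqref{eq:CVaRi} the CVaR equals $\inf_\eta g(\eta) = g(F_X^{-1}(\alpha)) = F_X^{-1}(\alpha) + (1-\alpha)^{-1}\mathbf{E}[(X - F_X^{-1}(\alpha))_+]$, which is exactly the stated expression; one can additionally check consistency with the quantile-integral definition $\rho^C = (1-\alpha)^{-1}\int_\alpha^1 F_X^{-1}(\tau)\,d\tau$ via the change of variables $\tau = F_X(x)$. The only mild subtlety — and the part I would be most careful about — is handling the boundary/degenerate behavior of the quantile when $F_X$ has flat pieces or when $\alpha$ is near $0$ or $1$; continuity of $F_X$ rules out jumps, so the stationarity equation $F_X(\eta) = \alpha$ is genuinely solvable, but one should note that the minimizer need not be unique if $F_X$ is constant on an interval, in which case $F_X^{-1}(\alpha)$ as defined (the infimum) is still a valid choice. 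Since this is quoted verbatim from \citep{rockafellar2000optimization}, a short argument along these lines suffices.
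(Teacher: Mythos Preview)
Your argument is correct and is the standard route to this classical result: convexity of $g$, differentiation of $\mathbf{E}[(X-\eta)_+]$ to get $g'(\eta)=1-(1-\alpha)^{-1}(1-F_X(\eta))$, and first-order optimality yielding $F_X(\eta)=\alpha$. Note, however, that the paper does not supply its own proof of this lemma at all --- it is stated purely as a citation of \citep{rockafellar2000optimization}, Theorem~1, and used as a black box thereafter --- so there is nothing in the paper to compare your derivation against beyond confirming that your proof establishes exactly the statement being quoted.
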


From the sequence $X_{k,\,t}$, $t\in S_{k}(n)$ of arm $k$ as defined in Algorithm 1,
the empirical distribution for $X_{k}$ up to time $n$ is
\begin{equation*}
F_{k,\,T_{k}(n)}(x)\triangleq\frac{1}{|S_{k}(n)|}\sum_{t\in S_{k}(n)}I\left\{ x_{k,\,t}\leq x\right\},
\end{equation*}
where $|\cdot|$ denotes the number of items in a set. Similarly, from the sequence $\{x_{I_{t},\,t}\}_{t=1}^{n}$ drawn by the policy $\varphi$, we define
\begin{equation*}
F_{\varphi,\,n}(x)\triangleq\frac{1}{n}\sum_{t=1}^{n}I\left\{ x_{I_{t},\,t}\leq x\right\}.
\end{equation*}
The empirical estimate $\widehat{\rho}_{k,\,n}$ of the CVaR of arm $k$ may then be defined as
\begin{equation*}
\widehat{\rho}_{k,\,n}^{C}\triangleq F_{k,\,T_{k}(n)}^{-1}(\alpha)+(1-\alpha)^{-1}\frac{1}{T_{k}(n)}\sum_{t=1}^{T_{k}(n)}[x_{k,\,t}-F_{k,\,T_{k}(n)}^{-1}(\alpha)]_{+},
\end{equation*}
and the empirical estimate $\widehat{\rho}_{\varphi,\,n}$ of the CVaR for the policy $\varphi$ may be defined as
\begin{equation*}
\widehat{\rho}_{\varphi,\,n}^{C}\triangleq F_{\varphi,\,n}^{-1}(\alpha)+(1-\alpha)^{-1}\frac{1}{n}\sum_{t=1}^{n}[x_{I_{t},\,t}-F_{\varphi,\,n}^{-1}(\alpha)]_{+}.
\end{equation*}
According to (\ref{eq:pseudoregret}), the pseudo regret under the CVaR up to time $n$ is then
\begin{equation*}
R_{n}^{C}(\varphi)\triangleq \mathbf{E}\widehat{\rho}_{\varphi,\,n}^{C}-\rho_{k^{*}}^{C}.
\end{equation*}
For a fixed $\alpha$, let $\eta_{k}\triangleq F_{k}^{-1}(\alpha)$,
$\eta_{k,\,n}\triangleq F_{k,\,n}^{-1}(\alpha)$, $\eta_{\varphi,\,n}\triangleq F_{\varphi,\,n}^{-1}(\alpha)$,
$\eta^{*}\triangleq\eta_{k^{*}}$ and $\eta_{n}^{*}\triangleq\eta_{k^{*},\,n}$. Using these notations, we can represent the pseudo regret in the following explicit form
\begin{equation*}
R_{n}^{C}(\varphi)\triangleq\mathbf{E}(\eta_{\varphi,\,n}-\eta^{*})+(1-\alpha)^{-1}\frac{1}{n}\sum_{k=1}^{K}\mathbf{E}T_{k}(n)\mathbf{E}[(X_{k}-\eta_{\varphi,\,n})_{+}-(X_{k^{*}}-\eta^{*})_{+}].
\end{equation*}
The term $\varepsilon_{\rho}(n,\,T_k(n),\,K,\delta)$ for CVaR case is defined as follows
\begin{equation*}
\varepsilon_{\rho}^{C}(n,\,T_k(n),\,K,\delta) \triangleq \left[\left(1-\alpha\right)^{-1}\left(1-\frac{3\delta}{n}\right)M+2\left[1+\left(1-\alpha\right)^{-1}\right]m\left(\alpha\right)\right]\sqrt{\frac{\log\frac{2n^{2}K}{\delta}}{2T_{k}\left(n\right)}}
\end{equation*}
The term $\varepsilon_{\rho}^{C}$ consists of two parts. They are the corresponding confidence bounds on the two terms in $\widehat{\rho}_{k,\,n}^{C}$ respectively. Thus, the index for arm $k$ at time $n$ for CVaR is
\begin{equation*}
B_{k,\,n}^{C} \triangleq \widehat{\rho}_{k,\,n}^{C}- \varepsilon_{\rho}^{C}(n,\,T_k(n),\,K,\delta).
\end{equation*}
The upper bound for the pseudo regret under CVaR is established in the following theorem. We let
\begin{equation*}
M_{\varphi}\left(n\right)\triangleq\max\left\{ \alpha,\:1-\alpha\right\} O\left(\frac{\log n}{n}\right)m\left(\alpha\right)+2m\left(\alpha\right)\sqrt{\frac{\log\frac{4n}{\delta}}{2n}},
\end{equation*}
and
\begin{equation*}
M_{k}(n)\triangleq2\log\frac{2n^{2}K}{\delta}\left(\frac{2\left[1+\left(1-\alpha\right)^{-1}\right]m\left(\alpha\right)+\left(1-\alpha\right)^{-1}M}{\Delta_{k}^{C}-(1-\alpha)^{-1}\frac{4\delta}{n^{2}K}M}\right)^{2}+3\delta,
\end{equation*}
where $\Delta_{k}^{C}\triangleq \rho_{k}^{C}-\rho_{k^{*}}^{C}$.
\begin{theorem}\label{CVaR theorem}
\noindent For all $n>K$, the pseudo regret $R_{n}^{C}(\varphi)$ satisfies
\begin{align*}
R_{n}^{C}(\varphi) \leq
& (1-\frac{4\delta}{n})[M_{\varphi}(n)+(1-\alpha)^{-1}\frac{1}{n}\sum_{k=1}^{K}M_{k}(n)(M+\Delta_{k}^{C})]\\
& +\frac{4\delta}{n}[((1-\alpha)^{-1}+1)M+(1-\alpha)^{-1}\Delta_{k}^{C}].
\end{align*}
Specifically, the order of the upper bound is dominated by the term $M_{\varphi}(n)$ which is $O(\sqrt{\log n / n})$.
\end{theorem}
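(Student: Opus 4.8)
The plan is to start from the explicit decomposition of $R_n^C(\varphi)$ recorded above, into a \emph{quantile part} $\mathbf{E}(\eta_{\varphi,n}-\eta^*)$ and a \emph{tail part} $(1-\alpha)^{-1}\frac1n\sum_k \mathbf{E}T_k(n)\,\mathbf{E}[(X_k-\eta_{\varphi,n})_+ - (X_{k^*}-\eta^*)_+]$, and to bound each piece separately, everywhere conditioning on a ``good event'' on which all the relevant concentration inequalities hold and treating its (summably small) complement by crude bounds of order $M$ and $\Delta_k^C$.

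First I would verify Condition~1 for the stated $\varepsilon_\rho^C$. Writing $\widehat\rho_{k,n}^C-\rho_k^C$ as the sum of the empirical-quantile error $F_{k,T_k(n)}^{-1}(\alpha)-F_k^{-1}(\alpha)$ and the empirical-shortfall error, I would control the former by a density-lower-bound argument from Assumption~\ref{CVaR assumption 1}: an empirical-CDF deviation at the quantile, controlled by Hoeffding, is converted into a quantile deviation of order $m(\alpha)\sqrt{\log(2n^2K/\delta)/(2T_k(n))}$, with the rare large-deviation event absorbed by the truncation at $M$ — this is the source of the $(1-3\delta/n)$ factor. The shortfall error is controlled by Hoeffding directly, since $[x_{k,t}-\eta]_+\in[0,M]$, plus an $m(\alpha)$-term for the error of plugging in $F_{k,T_k(n)}^{-1}(\alpha)$ for $\eta_k$. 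Adding the two confidence widths reproduces $\varepsilon_\rho^C$ and the probability bound $C\delta/(n^2K)$. Given Condition~1, the standard index-policy argument bounds $\mathbf{E}T_k(n)$ for sub-optimal $k$: if arm $k$ is pulled at time $t$ then $B_{k,t}^C\le B_{k^*,t}^C$, which on the good event forces $\Delta_k^C\le 2\,\varepsilon_\rho^C(t,T_k(t),K,\delta)$ up to the $O(\delta/(n^2K))$-order correction appearing in the denominator of $M_k(n)$; since $\varepsilon_\rho^C$ decays like $\sqrt{\log(2t^2K/\delta)/T_k(t)}$, this caps $T_k(n)$ at the order $\log(n^2K/\delta)/(\Delta_k^C)^2$, and summing the complementary failure probabilities over $t$ (which converges, being $O(\sum_t 1/t^2)$) adds the $+3\delta$, giving $\mathbf{E}T_k(n)\le M_k(n)$.

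Next, for the tail part I would use $|(x-a)_+-(x-b)_+|\le|a-b|$ to replace $\eta_{\varphi,n}$ by $\eta^*$ at a cost of $\mathbf{E}|\eta_{\varphi,n}-\eta^*|$; then the $k=k^*$ summand is negligible, each sub-optimal summand is at most $M+\Delta_k^C$ (both positive parts lie in $[0,M]$ and their expected difference is at most $\Delta_k^C$ up to the quantile shift), and combining with the $M_k(n)$ bound yields the $(1-\alpha)^{-1}\frac1n\sum_k M_k(n)(M+\Delta_k^C)$ term. For the quantile part, I would observe that $F_{\varphi,n}$ equals, up to a perturbation of size $\sum_{k\neq k^*}T_k(n)/n=O(\log n/n)$, the empirical CDF of $n$ i.i.d.\ draws from $X_{k^*}$; applying the same density-lower-bound/Hoeffding argument gives $\mathbf{E}|\eta_{\varphi,n}-\eta^*|\le M_\varphi(n)$, the $\max\{\alpha,1-\alpha\}\,O(\log n/n)\,m(\alpha)$ term absorbing the contamination and the $2m(\alpha)\sqrt{\log(4n/\delta)/(2n)}$ term the sampling error. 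A union bound over the $K$ arms and over times places the global good event at probability at least $1-4\delta/n$; splitting on it versus its complement assembles the two displayed lines of the theorem. Finally, since $\frac1n\sum_k M_k(n)(M+\Delta_k^C)=O(\log n/n)$ is dominated by $M_\varphi(n)=O(\sqrt{\log n/n})$, the stated order follows.

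I expect the main obstacle to be the first step — verifying Condition~1 with the precise constants in $\varepsilon_\rho^C$. The empirical CVaR is a nonsmooth functional of the sample through the empirical quantile, so the concentration argument must carefully compose a Hoeffding bound on the empirical CDF, the density lower bound from Assumption~\ref{CVaR assumption 1} to transfer it to the quantile, and a Hoeffding bound on the shortfall integral, all while tracking how the rare large-deviation event (bounded crudely by $M$) generates the $(1-3\delta/n)$ factor and the $\delta/(n^2K)$-order corrections that resurface in $M_k(n)$. Once Condition~1 is in hand, the remainder is the standard index-policy bookkeeping.
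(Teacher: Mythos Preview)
Your proposal is correct and follows essentially the same route as the paper: the same decomposition into a quantile part and a tail part, the same good-event conditioning (the paper's $\Lambda_n\subseteq\Omega_n^C$ with $\mathbf{P}\{\Lambda_n\}\ge 1-4\delta/n$), the same index-policy argument yielding $\mathbf{E}T_k(n)\le M_k(n)$, and the same contamination argument for $|\eta_{\varphi,n}-\eta^*|$ that produces $M_\varphi(n)$. The only notable technical variation is that the paper establishes the quantile concentration (Lemma~\ref{lemma 2}) via the Bahadur representation of sample percentiles rather than your direct empirical-CDF/density-lower-bound inversion, but the two arguments yield the same $m(\alpha)\sqrt{\log(\cdot)/2t}$ bound and are interchangeable here.
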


\subsection{Mean-deviation risk}\label{Mean-deviation risk measure}

We define the MD as follows.
\begin{definition}
\textbf{(MD risk measure)} The mean-deviation of arm $k$ is
\begin{equation*}
\rho_{k}^{M} \triangleq \mu_{k}+\gamma\Vert X_{k}\Vert_{p},
\end{equation*}
where $\Vert X_{k}\Vert _{p}\triangleq(\mathbf{E}|X_{k}-\mu_{k}|^{p})^{\frac{1}{p}}$,
$\gamma\geq0$ and $p\in\left[1,\:\infty\right)$.
\end{definition}
The constant $\gamma$ is a coefficient controlling the trade-off between the mean and the $L_{p}$ norm. When $\gamma=0$, the MD measure becomes expectation; in the special case of $p=2$, the second term becomes the standard deviation. Based on the observation sequence of arm $k$, the empirical estimate $\widehat{\rho}_{k,\,n}$ of the MD risk of arm $k$ may be defined as
\begin{equation*}
\widehat{\rho}_{k,\,n}^{M} \triangleq\overline{x}_{k,\,T_{k}(n)}+\gamma\left(\frac{1}{|S_{k}(n)|}\sum_{t\in S_{k}(n)}\left|x_{k,\,t}-\overline{x}_{k,\,T_{k}(n)}\right|^{p}\right)^{\frac{1}{p}},
\end{equation*}
where $\overline{x}_{k,\,T_{k}(n)}\triangleq\frac{1}{|S_{k}(n)|}\sum_{t\in S_{k}(n)}x_{k,\,t}$. Based on the whole observation sequence the empirical estimate $\widehat{\rho}_{\varphi,\,n}$ of the MD risk for the policy $\varphi$ may be defined as
\begin{equation*}
\widehat{\rho}_{\varphi,\,n}^{M} \triangleq \overline{x}_{\varphi,\,n}+\gamma\left(\frac{1}{n}\sum_{t=1}^{n}\left|x_{I_{t},\,t}-\overline{x}_{\varphi,\,n}\right|^{p}\right)^{\frac{1}{p}},
\end{equation*}
where $\overline{x}_{\varphi,\,n}=\frac{1}{n}\sum_{t=1}^{n}x_{I_{t},\,t}$.
The pseudo regret under the MD up to time $n$ is then
\begin{align*}
R_{n}^{M}(\varphi) & \triangleq\left[E\left(\overline{x}_{\varphi,\,n}\right)-\mu_{k^{*}}\right]+\gamma\left[\mathbf{E}\left(\frac{1}{n}\sum_{t=1}^{n}\left|x_{I_{t},\,t}-\overline{x}_{\varphi,\,n}\right|^{p}\right)^{\frac{1}{p}}-\left(\mathbf{E}\left|X_{k^{*}}-\mu_{k^{*}}\right|^{p}\right)^{\frac{1}{p}}\right].
\end{align*}
The term $\varepsilon_{\rho}(n,\,T_k(n),\,K,\delta)$ for MD is defined as below
\begin{equation*}
\varepsilon_{\rho}^{M}(n,\,T_k(n),\,K,\delta) \triangleq
M\sqrt{\frac{\log\frac{4n^{2}K}{\delta}}{T_{k}\left(n\right)}}-M\left[\left(p+1\right)\sqrt{\frac{\log\frac{4n^{2}K}{\delta}}{2T_{k}\left(n\right)}}\right]^{\frac{1}{p}}.
\end{equation*}
Similar to the CVaR case, $\varepsilon_{\rho}^{M}$ is also a summation of two terms. The are the corresponding bounds for the empirical mean and empirical $L_{p}$ norm in
$\widehat{\rho}_{k,\,n}^{M}$ respectively. So the index for arm $k$ at time $n$ for MD is
\begin{equation*}
B_{k,\,n}^{M} \triangleq \rho_{k,\,n}^{M}-\varepsilon_{\rho}^{M}(n,\,T_k(n),\,K,\delta).
\end{equation*}
The upper bound for the pseudo regret under the MD is given in the following theorem. We let
\begin{equation*}
N_{\varphi}\left(n\right)\triangleq\sum_{k\neq k^{*}}\left(1-\frac{\delta}{n}\right)m_{k}\log\frac{4n^{2}K}{\delta}+\left(k-1\right)\delta,
\end{equation*}
where $m_{k}$ will be specified in Lemma \ref{lemma 9} below.

\begin{theorem}\label{MD theorem}
\noindent For all $n>K$, the pseudo regret satisfies
\begin{equation*}
R_{n}^{M}(\varphi) \leq \left(\Delta_{k}^{M}+2\gamma pM^{p}\right)\frac{N_{\varphi}\left(n\right)}{n}+M\left(\frac{\delta}{n}+\left(1-\frac{\delta}{n}\right)\sqrt{\frac{\log\frac{4n^{2}K}{\delta}}{2\left(n-N_{\varphi}\left(n\right)\right)}}\right),
\end{equation*}
where $\Delta_{k}^{M} \triangleq \left|\mu_{k}-\mu_{k^{*}}\right|$. Specifically, the order of the upper bound is dominated by the term $\sqrt{\log(4n^{2}K/\delta)[2(n-N_{\varphi}(n))]^{-1}}$ which is of the order $O(\sqrt{\log n / n})$.
\end{theorem}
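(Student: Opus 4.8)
The plan is to split the pseudo regret $R_{n}^{M}(\varphi)$ into its \emph{mean component} $\mathbf{E}(\overline{x}_{\varphi,\,n})-\mu_{k^{*}}$ and its \emph{deviation component} $\gamma[\mathbf{E}(\frac1n\sum_{t=1}^{n}|x_{I_{t},\,t}-\overline{x}_{\varphi,\,n}|^{p})^{1/p}-(\mathbf{E}|X_{k^{*}}-\mu_{k^{*}}|^{p})^{1/p}]$, and to control each on a high-probability ``clean'' event $\mathcal{E}_{n}$ on which every empirical risk estimate $\widehat{\rho}_{k,\,n}^{M}$ lies within $\varepsilon_{\rho}^{M}(\cdot)$ of $\rho_{k}^{M}$. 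Off $\mathcal{E}_{n}$, which after a union bound over arms and time steps has probability at most $\delta/n$, I would use the trivial bounds $0\le\overline{x}_{\varphi,\,n}\le M$ and $0\le(\frac1n\sum_{t}|x_{I_{t},\,t}-\overline{x}_{\varphi,\,n}|^{p})^{1/p}\le M$, which produce the explicit $\delta/n$ terms in the statement.

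The first task is to check that the stated $\varepsilon_{\rho}^{M}(n,\,T_{k}(n),\,K,\delta)$ satisfies Condition~1. I would bound $|\widehat{\rho}_{k,\,n}^{M}-\rho_{k}^{M}|$ by (i) $|\overline{x}_{k,\,T_{k}(n)}-\mu_{k}|$ plus (ii) $\gamma$ times the gap between the empirical $L_{p}$ seminorm and $\Vert X_{k}\Vert_{p}$. Term (i) is Hoeffding on $[0,M]$, contributing the $M\sqrt{\log(4n^{2}K/\delta)/T_{k}(n)}$ piece. For term (ii) I would first replace the empirical centering $\overline{x}_{k,\,T_{k}(n)}$ by $\mu_{k}$ at the cost of another $|\overline{x}_{k,\,T_{k}(n)}-\mu_{k}|$ term (triangle inequality in $L_{p}$ of the empirical measure), then apply Hoeffding to the bounded variables $|x_{k,\,t}-\mu_{k}|^{p}\in[0,M^{p}]$, and finally pass through $t\mapsto t^{1/p}$ via $|a^{1/p}-b^{1/p}|\le|a-b|^{1/p}$ for $a,b\ge0$ and $p\ge1$; boundedness of the support keeps everything finite. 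Collecting these and taking a union bound over the two deviation events yields exactly the displayed $\varepsilon_{\rho}^{M}$, hence Condition~1 with an absolute constant.

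With Condition~1 in hand, the next step is the standard index-policy argument bounding the expected number of pulls of each sub-optimal arm: on $\mathcal{E}_{n}$ an arm $k\ne k^{*}$ can be selected only while $2\varepsilon_{\rho}^{M}(\cdot)$ exceeds the risk gap $\rho_{k}^{M}-\rho_{k^{*}}^{M}$, and since $\varepsilon_{\rho}^{M}(\cdot)$ decreases in $T_{k}(n)$ this forces $T_{k}(n)$ to be at most of order $m_{k}\log(4n^{2}K/\delta)$; together with the $\delta/n$-mass of the complement this gives the per-arm bound whose sum over $k\ne k^{*}$ is $N_{\varphi}(n)$ (the forthcoming Lemma~\ref{lemma 9}, which supplies the constants $m_{k}$). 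The mean component then follows from $\mathbf{E}(\overline{x}_{\varphi,\,n})-\mu_{k^{*}}=\frac1n\sum_{k=1}^{K}\mathbf{E}T_{k}(n)(\mu_{k}-\mu_{k^{*}})$ by bounding $|\mu_{k}-\mu_{k^{*}}|\le\Delta_{k}^{M}$ and $\sum_{k\ne k^{*}}\mathbf{E}T_{k}(n)\le N_{\varphi}(n)$. For the deviation component I would compare $\frac1n\sum_{t=1}^{n}|x_{I_{t},\,t}-\overline{x}_{\varphi,\,n}|^{p}$ with $\frac1n\sum_{t\in S_{k^{*}}(n)}|x_{k^{*},\,t}-\mu_{k^{*}}|^{p}$: removing the at most $N_{\varphi}(n)$ observations that come from sub-optimal arms perturbs the empirical $p$-th moment by at most $M^{p}N_{\varphi}(n)/n$ each way, re-centering from $\overline{x}_{\varphi,\,n}$ to $\mu_{k^{*}}$ costs a further amount controlled by $pM^{p-1}|\overline{x}_{\varphi,\,n}-\mu_{k^{*}}|$ (mean-value bound on $t\mapsto t^{p}$), and the residual $k^{*}$-only average concentrates around $\mathbf{E}|X_{k^{*}}-\mu_{k^{*}}|^{p}$ at rate $\sqrt{\log(4n^{2}K/\delta)/(2(n-N_{\varphi}(n)))}$ since at least $n-N_{\varphi}(n)$ observations are from $k^{*}$; after the $t\mapsto t^{1/p}$ step these assemble into the $(\Delta_{k}^{M}+2\gamma pM^{p})N_{\varphi}(n)/n$ and $M\sqrt{\cdots}$ terms of the claimed bound.

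The main obstacle is the deviation component: unlike the mean, the empirical $L_{p}$ seminorm is a nonlinear functional of a sample that mixes arms and is centered at a data-dependent point, so the accounting of (a) the bias from the $N_{\varphi}(n)$ foreign observations, (b) the re-centering error between $\overline{x}_{\varphi,\,n}$ and $\mu_{k^{*}}$, and (c) the non-Lipschitzness of $t\mapsto t^{1/p}$ near $0$ must be done carefully and with the correct constants to land the clean coefficient $2\gamma pM^{p}$. Once the decomposition is in place, the final order statement is routine: $N_{\varphi}(n)/n=O(\log n/n)$ while the surviving concentration term is of order $\sqrt{\log n/n}$, so the latter dominates.
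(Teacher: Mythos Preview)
Your proposal is correct and follows essentially the same route as the paper: the mean/deviation split, the high-probability event $\Omega_{n}^{M}$ (your $\mathcal{E}_{n}$) from Lemma~\ref{lemma 7}, the verification of Condition~1 and the pull-count bound via Lemma~\ref{lemma 9}, and the treatment of the deviation component by stripping out the at most $N_{\varphi}(n)$ sub-optimal observations and re-centering from $\overline{x}_{\varphi,\,n}$ to $\mu_{k^{*}}$ with the $pM^{p-1}$ Lipschitz bound of Lemma~\ref{lemma 6}. The only cosmetic difference is that the paper applies the $t\mapsto t^{1/p}$ step at the beginning of the deviation estimate rather than at the end, and extracts the dominant $\sqrt{\log n/n}$ term from the concentration of the optimal-arm sample \emph{mean} (after re-centering) rather than directly from the optimal-arm empirical $p$-th moment; this does not change the argument or the final bound.
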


\subsection{Shortfall risk}\label{Shortfall risk measure}

We define shortfall risk as follows.
\begin{definition}
\textbf{(Shortfall risk measure)} Let $l:\:\mathbb{R}\rightarrow\mathbb{R}$
be a convex loss function, the shortfall risk measure of arm $k$ is defined as
\begin{equation*}
\rho_{k}^{S}\triangleq\inf\left\{ \kappa\in\mathbb{R}:\:\mathbf{E}\left[l\left(X_{k}-\kappa\right)\right]\leq0\right\}.
\end{equation*}
\end{definition}
When $l\left(t\right)=t$, shortfall risk becomes the expectation of $X_{k}$.
We define the empirical estimate $\widehat{\rho}_{k,\,n}$ of the shortfall of arm $k$ as
\begin{equation*}
\widehat{\rho}_{k,\,n}^{S} \triangleq \inf\left\{ \kappa\in \mathbb{R}:\:\frac{1}{|S_{k}(n)|}\sum_{t\in S_{k}(n)}l\left(x_{k,\,t}-\kappa\right)\leq0\right\},
\end{equation*}
and the empirical estimate $\widehat{\rho}_{\varphi,\,n}$ of the shortfall for the policy may be defined as
\begin{equation*}
\widehat{\rho}_{\varphi,\,n}^{S} \triangleq \inf\left\{ \kappa\in \mathbb{R}:\:\frac{1}{n}\sum_{t=1}^{n}l\left(x_{I_{t},\,t}-\kappa\right)\leq0\right\}.
\end{equation*}
The pseudo regret under the shortfall up to time $n$ is then
\begin{equation*}
R_{n}^{S}(\varphi)\triangleq\mathbf{E}\inf\left\{ \kappa\in \mathbb{R}:\:\frac{1}{n}\sum_{t=1}^{n}l\left(x_{I_{t},\,t}-\kappa\right)\leq0\right\} -SF_{k^{*}}.
\end{equation*}
We make the following assumption about the underlying loss function $l$.
\begin{assumption}\label{shortfall assumption 1}
The loss function $l\left(t\right)$ is continuous, strictly increasing
and Lipschitz of order 1 in $t$ in the closed interval of $\left[-M,\:M\right]$
with Lipschitz constant $C_{l}$. In particular, we can let $l\left(0\right)=0$
and $l\left(t\right)$ be bounded by a constant $M_{l}$ uniformly
in $\left[0,\:M\right]$. Meanwhile, the derivative of $l\left(t\right)$
exists in $[-M,\:M]$ (with one-sided derivative at the
boundary) and is uniformly lower bounded by a constant $m_{l}$, i.e.
$l^{'}(t)\geq m_{l}$ for all $t\in[-M,\:M]$.
\end{assumption}
Actually, Assumption \ref{shortfall assumption 1} ensures that the set $\{ \kappa\in\mathbb{R}:\:\mathbf{E}[l(X_{k}-\kappa)]\leq0\}$
is non-empty for any arm and that $\rho_{k}^{S}$ lies within $[0,\:M]$, $k=1,\ldots,K$
(as we will show in Lemma \ref{lemma 8} below). Assumption
\ref{shortfall assumption 1} holds for many common loss functions including $l\left(t\right)=t$
which is the special case of expectation and $l\left(t\right)=e^{t}-1$ which is the widely used exponential loss function. The term $\varepsilon_{\rho}(n,\,T_k(n),\,K,\delta)$ for shortfall is defined as follows
\begin{equation*}
\varepsilon_{\rho}^{S}(n,\,T_k(n),\,K,\delta) \triangleq
2M_{l}M_{G}\sqrt{\frac{\log\frac{4n^{2}K}{\delta}}{2T_{k}\left(n\right)}}.
\end{equation*}
where $M_{G}$ is a constant which will be specified in Lemma \ref{lemma 10}.
$\varepsilon_{\rho}^{S}$ is a direct confidence bound on the empirical shortfall risk of arm $k$. The index for arm $k$ for the shortfall case at time $n$ is then
\begin{equation*}
B_{k,\,n}^{S} \triangleq \widehat{\rho}_{k,\,n}^{S}-\varepsilon_{\rho}^{S}(n,\,T_k(n),\,K,\delta).
\end{equation*}
The upper bound for the pseudo regret under the shortfall risk is
given in the following theorem.
\begin{theorem}\label{shortfall theorem}
For all $n>K$, the pseudo regret $\widetilde{R}_{n}^{SF}(\varphi)$ satisfies
\begin{equation*}
R_{n}^{S}(\varphi)\leq(1-\frac{\delta}{n})(\sum_{k\neq k^{*}}\frac{8M_{l}^{2}M_{G}}{nm_{l}\Delta_{k}^{S}}\log\frac{4n^{2}K}{\delta}+2M_{l}M_{G}\sqrt{\frac{\log\frac{4n^{2}K}{\delta}}{2T_{k^{*}}(n)}})+\frac{\delta}{n}M,
\end{equation*}
where $\Delta_{k}^{S} \triangleq \rho_{k}^{S}-\rho_{k^{*}}^{S}$. Specifically, the order of the upper bound is dominated by the term $2M_{l}M_{G}\sqrt{\log(4n^{2}K/\delta)[2T_{k^{*}}(n)]^{-1}}$ which is of the order $O(\sqrt{\log n / n})$.
\end{theorem}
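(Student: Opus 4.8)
The plan is to decompose the pseudo regret $R_n^S(\varphi) = \mathbf{E}\widehat{\rho}_{\varphi,n}^S - \rho_{k^*}^S$ by conditioning on a ``good event'' $\mathcal{G}$ on which every arm's empirical shortfall is close to its true shortfall and, consequently, the optimal arm has been pulled the overwhelming majority of the time. First I would establish, via Condition 1 (which I will have verified for $\varepsilon_\rho^S$ using Lemma \ref{lemma 10} and the constant $M_G$, together with the Dvoretzky--Kiefer--Wolfowitz inequality applied through the strictly increasing, Lipschitz loss function $l$ from Assumption \ref{shortfall assumption 1}), that $\mathbf{P}\{|\widehat{\rho}_{k,n}^S - \rho_k^S| \ge \varepsilon_\rho^S\} \le C\delta/(n^2 K)$. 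A union bound over $k=1,\ldots,K$ and over the relevant time steps then gives $\mathbf{P}(\mathcal{G}^c) \le \delta/n$ (absorbing constants), which accounts for the $\frac{\delta}{n}M$ term in the bound since on $\mathcal{G}^c$ we crudely bound $\widehat{\rho}_{\varphi,n}^S \le M$ using Assumption \ref{shortfall assumption 1} (that $\rho_k^S \in [0,M]$, shown in Lemma \ref{lemma 8}) and the analogous containment for the empirical estimate.

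Next, on the good event $\mathcal{G}$, I would bound $\mathbf{E}[T_k(n)\mathbf{1}_{\mathcal{G}}]$ for each suboptimal arm $k \ne k^*$ by the standard index-policy argument: arm $k$ is pulled at time $t$ only if $B_{k,t}^S \le B_{k^*,t}^S$, i.e. $\widehat{\rho}_{k,t}^S - \varepsilon_\rho^S(t,T_k(t),K,\delta) \le \widehat{\rho}_{k^*,t}^S - \varepsilon_\rho^S(t,T_{k^*}(t),K,\delta)$; on $\mathcal{G}$ this forces $\rho_k^S - 2\varepsilon_\rho^S(t,T_k(t),K,\delta) \le \rho_{k^*}^S$, hence $\Delta_k^S \le 2\varepsilon_\rho^S(t,T_k(t),K,\delta) = 4M_l M_G \sqrt{\log(4n^2K/\delta)/(2T_k(t))}$, so that $T_k(n)$ cannot exceed $\frac{8 M_l^2 M_G^2 \log(4n^2K/\delta)}{(\Delta_k^S)^2}$ — actually I should be careful to get the exact constant $\frac{8M_l^2 M_G}{m_l \Delta_k^S}$ claimed, which suggests the argument routes through $m_l$ by converting a gap in $\rho^S$ values to a gap in the defining expectations $\mathbf{E}[l(X_k-\kappa)]$ via the lower bound $l'\ge m_l$ from Assumption \ref{shortfall assumption 1}, before applying the concentration bound. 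Then $T_{k^*}(n) = n - \sum_{k\ne k^*}T_k(n) \ge n - \sum_{k\ne k^*}\frac{8M_l^2 M_G}{m_l\Delta_k^S}\log\frac{4n^2K}{\delta}$, and the empirical shortfall $\widehat{\rho}_{\varphi,n}^S$ built from the pooled sample is, on $\mathcal{G}$, within $2M_l M_G\sqrt{\log(4n^2K/\delta)/(2T_{k^*}(n))}$ of $\rho_{k^*}^S$ plus the contribution of the (few) suboptimal pulls, which is what yields the two terms inside the first parenthesis.

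Finally I would assemble the pieces: $R_n^S(\varphi) = \mathbf{E}[(\widehat{\rho}_{\varphi,n}^S - \rho_{k^*}^S)\mathbf{1}_{\mathcal{G}}] + \mathbf{E}[(\widehat{\rho}_{\varphi,n}^S - \rho_{k^*}^S)\mathbf{1}_{\mathcal{G}^c}] \le (1-\frac{\delta}{n})\bigl(\sum_{k\ne k^*}\frac{8M_l^2 M_G}{n m_l \Delta_k^S}\log\frac{4n^2K}{\delta} + 2M_l M_G\sqrt{\log(4n^2K/\delta)/(2T_{k^*}(n))}\bigr) + \frac{\delta}{n}M$, and then note that since $T_{k^*}(n) = n(1 - o(1))$ on $\mathcal{G}$, the square-root term is $O(\sqrt{\log n/n})$ and dominates the $O(\log n/n)$ first term. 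The main obstacle I anticipate is twofold: first, correctly handling that $\widehat{\rho}_{\varphi,n}^S$ is an implicit quantity (defined as an infimum over $\kappa$ of a pooled empirical inequality), so that translating ``the empirical CDF is close to a mixture of true CDFs'' into ``$\widehat{\rho}_{\varphi,n}^S$ is close to $\rho_{k^*}^S$'' requires inverting through $l$ and controlling the mixture weights $T_k(n)/n$ — this is presumably where Lemma \ref{lemma 10} and $M_G$ enter; and second, getting the precise constants (the $\frac{8M_l^2 M_G}{m_l}$ and the $4n^2K/\delta$ inside the logs) to line up, which forces care in how the $\delta/n$ losses are split between the deviation term in $\varepsilon_\rho^S$ and the failure probability of $\mathcal{G}$.
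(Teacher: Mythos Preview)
Your high-level skeleton (define a good event $\Omega_n^S$ with $\mathbf{P}\{(\Omega_n^S)^c\}\le\delta/n$, bound $T_k(n)$ for $k\ne k^*$ on $\Omega_n^S$ by the usual index comparison, then assemble) matches the paper's. However, the central technical step --- controlling the \emph{pooled} implicit estimator $\widehat{\rho}_{\varphi,n}^S$ --- is left as an acknowledged obstacle in your proposal, and your guesses about how the constants enter are off in a way that would lead you down the wrong path.

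Specifically, you conjecture that $m_l$ appears in the $T_k(n)$ bound by ``converting a gap in $\rho^S$ values to a gap in the defining expectations.'' In the paper it does not: the bound on $T_k(n)$ (Lemma~\ref{lemma 14}) is obtained exactly as you first wrote, by inverting $\Delta_k^S\le 4M_lM_G\sqrt{\log(4n^2K/\delta)/(2T_k(n))}$, and involves only $M_l,M_G,\Delta_k^S$. The constant $m_l$ enters \emph{only} in the comparison of $\widehat{\rho}_{\varphi,n}^S$ with $\rho_{k^*}^S$, and the paper does \emph{not} go through ``empirical CDF close to a mixture of true CDFs'' as you propose. Instead it uses the triangle inequality
\[
\bigl|\widehat{\rho}_{\varphi,n}^S-\rho_{k^*}^S\bigr|\le\bigl|\widehat{\rho}_{\varphi,n}^S-\widehat{\rho}_{k^*,n}^S\bigr|+\bigl|\widehat{\rho}_{k^*,n}^S-\rho_{k^*}^S\bigr|,
\]
where $\widehat{\rho}_{k^*,n}^S$ is the empirical shortfall based only on the $T_{k^*}(n)$ optimal-arm pulls. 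The second term is $\le 2M_lM_G\sqrt{\log(4n^2K/\delta)/(2T_{k^*}(n))}$ directly on $\Omega_n^S$. For the first term, the mean value theorem applied to $G_{\varphi,n}(\kappa)=\frac{1}{n}\sum_{t=1}^n l(x_{I_t,t}-\kappa)$, whose derivative is $\le -m_l$ everywhere by Assumption~\ref{shortfall assumption 1}, gives
\[
\bigl|\widehat{\rho}_{\varphi,n}^S-\widehat{\rho}_{k^*,n}^S\bigr|\le\frac{1}{m_l}\bigl|G_{\varphi,n}(\widehat{\rho}_{k^*,n}^S)-G_{\varphi,n}(\widehat{\rho}_{\varphi,n}^S)\bigr|=\frac{1}{m_l}\bigl|G_{\varphi,n}(\widehat{\rho}_{k^*,n}^S)\bigr|.
\]
Since the optimal-arm block of $G_{\varphi,n}(\widehat{\rho}_{k^*,n}^S)$ vanishes by definition of $\widehat{\rho}_{k^*,n}^S$, what remains is $\frac{1}{n}\sum_{k\ne k^*}\sum_{t} l(x_{k,t}-\widehat{\rho}_{k^*,n}^S)$, bounded in absolute value by $\frac{M_l}{n}\sum_{k\ne k^*}T_k(n)$. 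Plugging in the $T_k(n)$ bound yields exactly $\sum_{k\ne k^*}\frac{8M_l^2M_G}{nm_l\Delta_k^S}\log\frac{4n^2K}{\delta}$. This is the missing idea in your sketch; your route via DKW and mixture CDFs is not needed and would not produce these constants. (Incidentally, the single-arm concentration in Lemma~\ref{lemma 12} is obtained via the Bahadur representation of the M-estimator plus Hoeffding, not DKW; that is where $M_G$ enters.)
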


\subsection{Discussion}\label{Discussion}

For all three risk measures, we obtain a uniform upper bound of the order $O(\sqrt{\log n / n})$
on the pseudo regret. This convergence rate is different from the order of $O(\log n / n)$
in the risk-neutral case. In the risk-neutral MAB, the pseudo regret
$\sum_{k=1}^{K}\mathbf{E}T_{k}(n)(\mu_{k^{*}}-\mu_{k})$
is actually a linear function with respect to the average number of pulls from
the sub-optimal arms, and is thus dominated by the order of $\mathbf{E}T_{k}(n)$.
However, this situation does not hold for the risk-averse case. We explain in more detail for the specific cases of CVaR and MD. We reorganize the pseudo regret for CVaR and MD in a new consistent form, of which the derivation follows from the results in later sections. In this new form, the linear part and the non-linear part w.r.t $\mathbf{E}T_{k}(n)$ are separated for clarity. The form is presented as below

\begin{equation}\label{pseudo regret separate}
\widetilde{R}_{n}\left(\varphi\right)= \frac{1}{n}\sum_{k\neq k^{*}}\mathbf{E}T_{k}(n)S_{k}+S^{*}.
\end{equation}
In the case of the CVaR risk measure,
\begin{align*}
S_{k} & =\left(1-\alpha\right)^{-1}\mathbf{E}\left[\left(X_{k}-\eta_{\varphi,\,n}\right)_{+}-\left(X_{k^{*}}-\eta_{\varphi,\,n}\right)_{+}\right],\\
S^{*} & =\left(1-\alpha\right)^{-1}\mathbf{E}\left[\left(X_{k^{*}}-\eta_{\varphi,\,n}\right)_{+}-\left(X_{k^{*}}-\eta^{*}\right)_{+}\right]+\mathbf{E}\left(\eta_{\varphi,\,n}-\eta^{*}\right),
\end{align*}
when $\alpha=0$, all $\eta$’s become the essential infimum. Therefore,
\begin{align*}
S_{k} & =\mathbf{E}\left[\left(X_{k}-\eta_{\varphi,\,n}\right)-\left(X_{k^{*}}-\eta_{\varphi,\,n}\right)\right]=\mathbf{E}\left(X_{k}-X_{k^{*}}\right),\\
S^{*} & =\mathbf{E}\left[\left(X_{k^{*}}-\eta_{\varphi,\,n}\right)-\left(X_{k^{*}}-\eta^{*}\right)\right]+\mathbf{E}\left(\eta_{\varphi,\,n}-\eta^{*}\right)=0.
\end{align*}
In the case of the MD risk measure,
\begin{align*}
S_{k} & =\Delta_{k}+\gamma\left|\mathbf{E}\left|x_{k,\,t}-\overline{x}_{\varphi,\,n}\right|^{p}-\mathbf{E}\left|X_{k^{*}}-\mu_{k^{*}}\right|^{p}\right|,\\
S^{*} & =\frac{\gamma\mathbf{E}T_{k^{*}}\left(n\right)}{n}\left|\mathbf{E}\left|X_{k^{*}}-\overline{x}_{\varphi,\,n}\right|^{p}-\mathbf{E}\left|X_{k^{*}}-\mu_{k^{*}}\right|^{p}\right|,
\end{align*}
when $\gamma=0$,
\begin{align*}
S_{k} & =\Delta_{k}=\mathbf{E}\left(X_{k}-X_{k^{*}}\right),\\
S^{*} & =0.
\end{align*}
In the risk-averse case, the pseudo regret includes the nonlinear term of $S^{*}$. As each $S_{k}$ is bounded, the first term of (\ref{pseudo regret separate}) is dominated by the order of $\frac{1}{n}\sum_{k\neq k^{*}}\mathbf{E}T_{i}(n)$, which is of the order $O(\log n / n)$.
However, the second term is dominated by the asymptotic order of the specific statistic in each case. As a result, the overall order of the upper bound is actually $O(\sqrt{\log n / n})$.
In the special case where the three risk measures reduce to expectation, the second term $S^{*}$ is zero and the risk-averse pseudo regret is consistent with the risk-neutral pseudo regret. Thus, the asymptotic order of the upper bound on the pseudo regret becomes $O(\log n / n)$ for the risk-neutral case as expected.

\section{Proofs of the main results}\label{Proof of the main results}
In this section, we give the detailed proofs of our three main theorems. Throughout this section, we use the sequence $\{x_{k,\,t}\}_{t=1}^{n}$  which is an i.i.d. sequence with each $x_{k,\,t}$ subject to $F_{k}$. We introduce this sequence for notational convenience. Note that $\{x_{k,\,t}\}_{t=1}^{n}$ is not actually chosen by any algorithm, thus is unobservable.

The proofs in Sections \ref{CVaR risk measure proof}-\ref{Shortfall risk measure proof} all follow a similar general procedure. Recall that the index for each arm consists of two parts: $\widehat{\rho}_{k,\,n}$ and $\varepsilon_{\rho}(\cdot)$. We first prove that $\varepsilon_{\rho}(\cdot)$ is an upper bound on the estimation error $|\widehat{\rho}_{k,\,n}-\rho_{k}|$ with a probability of $1-O(n^{-2})$. Thus, the total probability that all $\rho_{k}$'s falls below the index at each of the first $n$ times is of order $O(n^{-1})$, which means a sub-optimal arm becomes less likely to be chosen by the algorithm as the number of pulls of it increases. Based on this conclusion, we may further show that the total number of pulls of all sub-optimal arms is upper bounded. This means that the portion of pulls of the optimal arm in the whole sequence increases, thus, the empirical risk based on this sequence is supposed to approximate the true risk of the optimal arm. Finally, we may prove the upper bound for the pseudo regret in each of our three main theorems. The proof of each starred lemma is detailed in the appendix.

\subsection{Conditional value-at-risk measure}\label{CVaR risk measure proof}

Recall the definitions of $\eta_{k}$, $\eta_{k,\,n}$, $\eta_{\varphi,\,n}$, $\eta^{*}$ and $\eta_{n}^{*}$ in Section \ref{CVaR risk measure}.
In this section, we first derive a probability upper bound for $|\eta_{k,\,n}-\eta_{k}|$ in Lemma \ref{lemma 2}. Based on the results in Lemma \ref{lemma 2}, we may establish an upper bound on $\widehat{\rho}_{k,\,n}^{C}$, then we may bound the number of pulls of all sub-optimal arms in Lemma \ref{lemma 4}. In Lemma \ref{lemma 5}, we construct the upper bound for another crucial term $|\eta_{\varphi,\,n}-\eta^{*}|$ that appears in the pseudo regret. Based on the results in Lemmas \ref{lemma 2}-\ref{lemma 5}, we may decompose the pseudo regret into different parts and bound each part. Then we prove the upper bound for the pseudo regret in Theorem \ref{CVaR theorem} in the end of this section.

\noindent \textbf{Step 1: Bounding} $|\eta_{k,\,n}-\eta_{k}|$ \textbf{which leads to a bound on} $|\widehat{\rho}_{k,\,n}^{C}-\rho_{k}^{C}|$.
\begin{lemma}\label{lemma 2}
\noindent Suppose Assumption \ref{CVaR assumption 1} holds, then for all $k=1,\ldots,K$, we have
\begin{equation}\label{lemma 2-1}
\eta_{k,\,n}\overset{\mathbf{P}}{\rightarrow}\eta_{k},
\end{equation}
as $n\rightarrow\infty$, and $\forall\:n>1$, with probability at least $1-\delta$
\begin{equation}\label{lemma 2-2}
|\eta_{k,\,n}-\eta_{k}|\leq\frac{2}{f_{k}(\eta_{k})}\sqrt{\frac{\log\frac{4}{\delta}}{2n}}.
\end{equation}
Furthermore, noting $M$ is the uniform essential bound on all $X_{k}$'s, we have
\begin{equation}\label{lemma 2-3}
\mathbf{E}\left(\left|\eta_{k,\,n}-\eta_{k}\right|\right)\leq\frac{2}{f_{k}(\eta_{k})}\sqrt{\frac{\log\frac{4}{\delta}}{2n}}
+\delta M.
\end{equation}
\end{lemma}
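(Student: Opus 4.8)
The plan is to establish the three claims in sequence, since each builds on the previous one. Recall that $\eta_k = F_k^{-1}(\alpha)$ is the $\alpha$-quantile of $X_k$ and $\eta_{k,n} = F_{k,n}^{-1}(\alpha)$ is the empirical $\alpha$-quantile based on $n$ i.i.d.\ draws $x_{k,1},\dots,x_{k,n}$ from $F_k$. For \eqref{lemma 2-1}, the consistency $\eta_{k,n} \xrightarrow{\mathbf{P}} \eta_k$ is a classical fact: since $F_k$ is continuously differentiable near $\eta_k$ with $f_k(\eta_k) > 0$ (Assumption \ref{CVaR assumption 1} guarantees $[f_k(\eta_k)]^{-1} \le m(\alpha) < \infty$, hence $f_k(\eta_k) > 0$), the quantile function $F_k^{-1}$ is continuous at $\alpha$, and the Glivenko–Cantelli theorem plus a standard inversion argument gives convergence in probability (indeed a.s.). I would state this briefly and cite Glivenko–Cantelli.

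For the finite-sample bound \eqref{lemma 2-2}, the key tool is the Dvoretzky–Kiefer–Wolfowitz (DKW) inequality: $\mathbf{P}\{\sup_x |F_{k,n}(x) - F_k(x)| \ge \epsilon\} \le 2e^{-2n\epsilon^2}$. Setting $2e^{-2n\epsilon^2} = \delta$ gives $\epsilon = \sqrt{\log(2/\delta)/(2n)}$; to get the $\log(4/\delta)$ in the statement one uses $\delta/2$ in DKW, or equivalently the two-sided split. On the event $\{\sup_x |F_{k,n}(x) - F_k(x)| \le \epsilon\}$, one shows $|\eta_{k,n} - \eta_k|$ is controlled by a mean-value/Taylor argument: since $F_k(\eta_{k,n})$ is within $\epsilon$ of $\alpha = F_k(\eta_k)$ (up to the empirical jump, handled by the $\le/\ge$ definition of the quantile), and $F_k$ has derivative $f_k$ bounded below near $\eta_k$ by $1/m(\alpha)$, we get $|\eta_{k,n} - \eta_k| \le \epsilon / f_k(\eta_k) \cdot (\text{const})$, yielding the factor $2/f_k(\eta_k)$ once the boundary jump of size $1/n$ and constants are absorbed. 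This is essentially the Bahadur-type quantile bound; the references \citep{bahadur1966note,arcones1996bahadur} cited just before the lemma are exactly for this.

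For \eqref{lemma 2-3}, I would split the expectation over the good event (probability $\ge 1 - \delta$) and its complement (probability $\le \delta$). On the good event, $|\eta_{k,n} - \eta_k|$ is bounded by the RHS of \eqref{lemma 2-2}; on the bad event, $|\eta_{k,n} - \eta_k| \le M$ deterministically because both quantiles lie in $[0, M]$ (the support is $[0,M]$). Hence $\mathbf{E}|\eta_{k,n} - \eta_k| \le (1-\delta)\cdot\frac{2}{f_k(\eta_k)}\sqrt{\frac{\log(4/\delta)}{2n}} + \delta M \le \frac{2}{f_k(\eta_k)}\sqrt{\frac{\log(4/\delta)}{2n}} + \delta M$, which is the claim.

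The main obstacle I anticipate is the careful bookkeeping in \eqref{lemma 2-2}: the empirical CDF is a step function, so $F_{k,n}(\eta_{k,n})$ need not equal $\alpha$ exactly, and one must track both the one-sided jump of size at most $1/n$ and the direction of the inequality in the definition $F^{-1}(\alpha) = \inf\{x : F(x) \ge \alpha\}$ on both the empirical and population sides, then invoke the lower bound on $f_k$ (valid in a neighborhood of $\eta_k$, which requires $\epsilon$ small enough, i.e.\ $n$ large enough — or else the trivial bound $M$ dominates and the inequality still holds). Getting the constant to come out as exactly $2/f_k(\eta_k)$ rather than some larger multiple requires choosing the DKW split carefully; I would verify that the $\log(4/\delta)$ arises from applying DKW with tolerance $\delta/2$ on each side and that the $1/n$ jump term is dominated by the $\sqrt{\log(4/\delta)/(2n)}$ term for the relevant range of $n$, absorbing it into the constant $2$.
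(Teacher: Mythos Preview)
Your approach to \eqref{lemma 2-2} is correct in spirit but takes a genuinely different route from the paper. The paper does not use the DKW inequality at all; instead it invokes the Bahadur representation
\[
\eta_{k,n}-\eta_k=\frac{1}{n}\sum_{t=1}^{n}\frac{\alpha-I\{x_{k,t}\le\eta_k\}}{f_k(\eta_k)}+r_n,\qquad r_n=o_p(n^{-1/2}),
\]
applies Hoeffding's inequality directly to the i.i.d.\ sum (each summand lies in $[\alpha-1,\alpha]$, range $1$), and then absorbs the remainder $r_n$ into the second half of the probability budget, which is exactly where the factor $2$ and the $\log(4/\delta)$ come from: Hoeffding at level $\delta/2$ gives the threshold $\frac{1}{f_k(\eta_k)}\sqrt{\log(4/\delta)/(2n)}$, and doubling it covers $r_n$. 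Conclusion \eqref{lemma 2-1} is obtained from the same representation via the strong law, and \eqref{lemma 2-3} is handled precisely as you propose, by splitting on the good event and using the deterministic bound $M$ on the complement.

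The trade-off is this: your DKW-plus-inversion argument is more self-contained for finite $n$ (no asymptotic remainder to worry about), but you will have to work harder to land on the exact constant $2/f_k(\eta_k)$ with $\log(4/\delta)$, and your mean-value step needs a lower bound on $f_k$ in a \emph{neighborhood} of $\eta_k$, not just at the point, which Assumption~\ref{CVaR assumption 1} does not give uniformly. The paper's Bahadur route sidesteps both issues---the constant falls out of Hoeffding and only $f_k(\eta_k)$ appears---at the cost of treating the $o_p(n^{-1/2})$ remainder somewhat loosely when claiming the bound ``for all $n>1$''. Either argument would be acceptable; just be aware that your constant-matching discussion (splitting DKW ``on each side'') is not quite right, since DKW is already two-sided, and that the paper's mechanism for the factor $2$ is the remainder absorption, not a union bound over tails.
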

\begin{proof}
\noindent Note that $\eta_{k,\,n}$ is the $\alpha$-quantile of the empirical distribution
of arm $k$, i.e. the sample percentile of $\{x_{k,\,t}\}_{t=1}^{n}$.
From the Bahadur representation of sample percentiles {[}\citep{dasgupta2008asymptotic} Theorem 17.1{]}, we have
\begin{equation}\label{lemma 2-4}
\eta_{k,\,n}-\eta_{k}=\frac{1}{n}\sum_{t=1}^{n}\frac{\alpha-I\{x_{k,\,t}\leq \eta_{k}\}}{f_{k}(\eta_{k})}+r_{n},
\end{equation}
where $r_{n}=o_{p}(n^{-1/2})$. Since $\mathbf{E}[(\alpha-I\{x_{k,\,t}\leq \eta_{k}\})]=0$,
by the strong law of large numbers, $\eta_{k,\,n}-\eta_{k}\overset{\mathbf{P}}{\rightarrow}0$.
Meanwhile, $\alpha-I\{x_{k,\,t}\leq \eta_{k}\}\in[{\alpha-1},\:{\alpha}]$,
so by Hoeffding's inequality, we have
\begin{equation}\label{lemma 2-5}
\mathbf{P}\left\{ \left|\frac{1}{n}\sum_{t=1}^{n}\frac{\alpha-I\{x_{k,\,t}\leq \eta_{k}\}}{f_{k}(\eta_{k})}\right|\geq\frac{1}{f_{k}(\eta_{k})}\sqrt{\frac{\log\frac{4}{\delta}}{2n}}\right\} \leq\frac{\delta}{2}.
\end{equation}
Combining (\ref{lemma 2-4}) with (\ref{lemma 2-5}), we see that
\begin{align*}
 & \quad\mathbf{P}\left\{ |\eta_{k,\,n}-\eta_{k}|\geq\frac{2}{f_{k}(\eta_{k})}\sqrt{\frac{\log\frac{4}{\delta}}{2n}}\right\} \\
 & \leq \mathbf{P}\left\{ \left|\frac{1}{n}\sum_{t=1}^{n}\frac{\alpha-I\{x_{k,\,t}\leq\eta_{k}\}}{f_{k}(\eta_{k})}\right|\geq\frac{1}{f_{k}(\eta_{k})}\sqrt{\frac{\log\frac{4}{\delta}}{2n}}\right\} +\mathbf{P}\left\{ \left|r_{n}\right|\geq\frac{1}{f_{k}(\eta_{i})}\sqrt{\frac{\log\frac{4}{\delta}}{2n}}\right\} \\
 & =\left(1+o\left(1\right)\right)\mathbf{P}\left\{ \left|\frac{1}{n}\sum_{t=1}^{n}\frac{\alpha-I\{x_{k,\,t}\leq\eta_{k}\}}{f_{k}(\eta_{k})}\right|\geq\frac{1}{f_{k}(\eta_{k})}\sqrt{\frac{\log\frac{4}{\delta}}{2n}}\right\} \\
 & =\left(1+o\left(1\right)\right)\frac{\delta}{2}\leq\delta.
\end{align*}
Conclusion (\ref{lemma 2-2}) follows by noting that the term $o(1)$
in the inequality above is actually smaller than $1$.
Conclusion (\ref{lemma 2-3}) follows by noting that $|\eta_{k,\,n}-\eta_{k}|$ is bounded by $M$.
\hfill\rule{1.5mm}{3mm}
\end{proof}

\noindent\textbf{Step 2: Bounding the number of pulls of each sub-optimal arm}. To further analyze the upper bound, we define a key subset as follows
\begin{equation*}
\Omega_{n}^{C}\triangleq\left\{ \begin{array}{c}
\forall\:k=1,\ldots,K,\:\forall t=1,\ldots,n,\quad|\eta_{k,\,t}-\eta_{k}|\leq2m\left(\alpha\right)\sqrt{\frac{\log\frac{2n^{2}K}{\delta}}{2t}}\\
\:and\:\left|\frac{1}{t}\sum_{s=1}^{t}\left\{ (x_{k,\,s}-\eta_{k,\,t})_{+}-\mathbf{E}\left[(X_{k}-\eta_{k,\,t})_{+}\right]\right\} \right|\leq M\sqrt{\frac{\log\frac{2n^{2}K}{\delta}}{2t}}
\end{array}\right\},
\end{equation*}
where $m(\alpha)$ has been defined in Assumption \ref{CVaR assumption 1}. The subset $\Omega_{n}^{C}$ consists of all the realizations that each $\eta_{k,\,t}$ is close enough to its real value $\eta_{k}$ at each of the first $n$ times. Confined in $\Omega_{n}^{C}$, the probability that the index of any sub-optimal arm is larger than that of the optimal arm is controlled, with an lower bound given in the following lemma.
\begin{lemmastar}\label{lemma 3}
\noindent For all $n\geq 1$, we have $\mathbf{P}\{ \Omega_{n}^{C}\} \geq1-{3\delta}/{n}$.
\end{lemmastar}
Through the proofs of Lemma \ref{lemma 2} and \ref{lemma 3}, we have shown that $\varepsilon_{\rho}^{C}$ satisfies Condition 1. In the next lemma, we construct an upper bound on the number of pulls of each sub-optimal arm.

\begin{lemma}\label{lemma 4}
\noindent For any sub-optimal arm $k\neq k^{*}$ and for all $n\geq 1$, the following relation holds with probability at least $1-3\delta/n$
\begin{equation}\label{lemma 4-1}
T_{k}(n)\leq2\log\frac{2n^{2}K}{\delta}\left(\frac{2\left[1+\left(1-\alpha\right)^{-1}\right]m\left(\alpha\right)
+\left(1-\alpha\right)^{-1}M}{\Delta_{k}^{CVaR}-(1-\alpha)^{-1}\frac{4\delta}{n^{2}K}M}\right)^{2}.
\end{equation}
Furthermore,
\begin{equation}\label{lemma 4-2}
\mathbf{E}T_{k}(n)\leq\left(1-3\delta\right)2\log\frac{2n^{2}K}{\delta}\left(\frac{2\left[1+\left(1-\alpha\right)^{-1}\right]m\left(\alpha\right)
+\left(1-\alpha\right)^{-1}M}{\Delta_{k}^{C}-(1-\alpha)^{-1}\frac{4\delta}{n^{2}K}M}\right)^{2}+3\delta,
\end{equation}
\noindent specifically, $\mathbf{E}T_{k}(n)\leq O(\log n)$.
\end{lemma}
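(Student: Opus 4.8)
The plan is to run the standard ``a sub-optimal arm is pulled only $O(\log n)$ times'' argument for index policies, now driven by the CVaR index $B^{C}_{k,t}$. Throughout I work on the event $\Omega_n^C$, which by the starred Lemma \ref{lemma 3} has probability at least $1-3\delta/n$. The first ingredient is a uniform concentration bound extracted from the definition of $\Omega_n^C$: for every arm $k$ and every $t\le n$,
\[ \bigl|\widehat{\rho}^{C}_{k,t}-\rho^{C}_{k}\bigr|\le\Bigl(2\bigl[1+(1-\alpha)^{-1}\bigr]m(\alpha)+(1-\alpha)^{-1}M\Bigr)\sqrt{\tfrac{\log(2n^{2}K/\delta)}{2T_{k}(t)}}=:\bar{\varepsilon}\bigl(T_{k}(t)\bigr). \]
This follows by writing $\widehat{\rho}^{C}_{k,t}-\rho^{C}_{k}=(\eta_{k,t}-\eta_k)+(1-\alpha)^{-1}\bigl(\tfrac1{T_k(t)}\sum_{s}(x_{k,s}-\eta_{k,t})_{+}-\mathbf E[(X_k-\eta_k)_{+}]\bigr)$, bounding $|\eta_{k,t}-\eta_k|$ via the first event of $\Omega_n^C$ together with $1/f_k(\eta_k)\le m(\alpha)$ (Assumption \ref{CVaR assumption 1}), and splitting the last bracket into the centred empirical average (controlled by the second event of $\Omega_n^C$) plus $\mathbf E[(X_k-\eta_{k,t})_{+}]-\mathbf E[(X_k-\eta_k)_{+}]$, which is at most $|\eta_{k,t}-\eta_k|$ because $z\mapsto(X_k-z)_{+}$ is $1$-Lipschitz. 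This is the same computation behind the verification of Condition 1 for $\varepsilon^{C}_{\rho}$.

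Next I would show that a pull of a sub-optimal arm $k$ at time $t$ forces $T_k(t)$ to be small. The selection rule $I_t=k$ gives $B^{C}_{k,t}\le B^{C}_{k^{*},t}$, i.e. $\widehat{\rho}^{C}_{k,t}-\varepsilon^{C}_{\rho}(n,T_k(t),K,\delta)\le\widehat{\rho}^{C}_{k^{*},t}-\varepsilon^{C}_{\rho}(n,T_{k^{*}}(t),K,\delta)$. On $\Omega_n^C$ I replace $\widehat{\rho}^{C}_{k,t}$ by its lower bound $\rho^{C}_{k}-\bar\varepsilon(T_k(t))$ and $\widehat{\rho}^{C}_{k^{*},t}$ by its upper bound $\rho^{C}_{k^{*}}+\bar\varepsilon(T_{k^{*}}(t))$; rearranging, writing $\Delta^{C}_{k}=\rho^{C}_{k}-\rho^{C}_{k^{*}}$, and absorbing the mismatch between $\bar\varepsilon(\cdot)$ and $\varepsilon^{C}_{\rho}(n,\cdot,K,\delta)$ (the source of the $(1-3\delta/n)$ factor in $\varepsilon^{C}_{\rho}$, which produces the $(1-\alpha)^{-1}\tfrac{4\delta}{n^{2}K}M$ correction) yields
\[ \sqrt{\tfrac{\log(2n^{2}K/\delta)}{2T_{k}(t)}}\ge\frac{\Delta^{C}_{k}-(1-\alpha)^{-1}\tfrac{4\delta}{n^{2}K}M}{2\bigl[1+(1-\alpha)^{-1}\bigr]m(\alpha)+(1-\alpha)^{-1}M}. \]
Solving for $T_k(t)$ gives exactly the right-hand side of \eqref{lemma 4-1}; since $T_k(\cdot)$ only increases at a pull of arm $k$, evaluating this at the last time arm $k$ is chosen proves \eqref{lemma 4-1} for $T_k(n)$ on $\Omega_n^C$.

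Finally, for \eqref{lemma 4-2} I would decompose $\mathbf E T_k(n)=\mathbf E[T_k(n)\mathbf 1_{\Omega_n^C}]+\mathbf E[T_k(n)\mathbf 1_{(\Omega_n^C)^{c}}]$, bound the first term by \eqref{lemma 4-1}, and bound the second by the trivial estimate $T_k(n)\le n$ times $\mathbf P((\Omega_n^C)^{c})\le 3\delta/n$, contributing at most $3\delta$. Since the resulting threshold grows like $\log n$ while the remaining factors are constants, $\mathbf E T_k(n)=O(\log n)$.

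The conceptually easy but technically delicate step is the rearrangement in the second paragraph: one must track the two different $\varepsilon$-type quantities (the genuine on-$\Omega_n^C$ bound $\bar\varepsilon$ versus the index's $\varepsilon^{C}_{\rho}$, which differ through the $(1-3\delta/n)M$ term) and the residual piece depending on $T_{k^{*}}(t)$ carefully enough that the algebra closes with the exact constants of \eqref{lemma 4-1}, in particular the $(1-\alpha)^{-1}\tfrac{4\delta}{n^{2}K}M$ appearing in the denominator. One also needs $\Delta^{C}_{k}$ to stay strictly above that correction --- true for $n$ not too small, using the uniqueness of $k^{*}$ --- so that the bound is finite and positive. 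Everything else is bookkeeping.
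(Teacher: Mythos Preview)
Your plan is essentially the paper's proof: decompose $\widehat\rho^C_{k,n}-\rho^C_k$ into the quantile error, the centred empirical average, and the expectation-difference term, bound each on $\Omega_n^C$, feed the resulting two-sided bound on the index into the comparison $B^C_{k,n}\le B^C_{k^*,n}$, solve for $T_k(n)$, and recover the expectation bound by splitting on $\Omega_n^C$ and $(\Omega_n^C)^c$.

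The one place your write-up differs from the paper is the treatment of the expectation-difference term $\mathbf E[(X_k-\eta_{k,T_k(n)})_+]-\mathbf E[(X_k-\eta_k)_+]$. You bound it pathwise on $\Omega_n^C$ by $|\eta_{k,T_k(n)}-\eta_k|\le 2m(\alpha)\sqrt{\cdots}$ via $1$-Lipschitzness, which is perfectly valid and in fact cleaner. The paper instead passes to $\mathbf E|\eta_{k,T_k(n)}-\eta_k|$ and invokes the expectation bound \eqref{lemma 2-3} of Lemma~\ref{lemma 2} with $\delta$ replaced by $2\delta/(n^2K)$; it is this substitution that injects the additive $(1-\alpha)^{-1}\tfrac{2\delta}{n^2K}M$ on each side, combining to the $(1-\alpha)^{-1}\tfrac{4\delta}{n^2K}M$ in the denominator of \eqref{lemma 4-1}. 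So your attribution of that correction to the $(1-3\delta/n)$ mismatch between $\bar\varepsilon$ and $\varepsilon^C_\rho$ is not where the paper gets it --- the mismatch has the wrong scale (order $\delta/n$ times a $\sqrt{\cdots}$ factor, not $\delta/(n^2K)$) and will not close to the stated constants. If you want to reproduce the paper's exact bound, use the expectation route for that one term; if you keep your pathwise bound you will get a slightly different (and, modulo the paper's bookkeeping, tighter) inequality. Either way the $O(\log n)$ conclusion is unaffected.
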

\begin{proof}
\noindent The main idea of the proof is to upper bound the difference between the empirical risk and the real risk of each arm. Further, constrained to $\Omega_{n}^{C}$, both an upper bound and a lower bound are available for the index of each arm. From this point, we may bound the number of pulls of each sub-optimal arm using the relation between the indices of the corresponding sub-optimal arm and the optimal arm. We first decompose $\widehat{\rho}_{k,\,n}^{C}$ as
\begin{align*}
\widehat{\rho}_{k,\,n}^{C} =
& \eta_{k,\,T_{k}(n)}+(1-\alpha)^{-1}\frac{1}{T_{k}(n)}\sum_{t=1}^{T_{k}(n)}(x_{k,\,t}-\eta_{k,\,T_{k}(n)})_{+}\\
= & \rho_{k}^{C}+(\eta_{k,\,T_{k}(n)}-\eta_{k})\\
& +(1-\alpha)^{-1}\left\{ \frac{1}{T_{k}\left(n\right)}\sum_{t=1}^{T_{k}\left(n\right)}(x_{k,\,t}-\eta_{k,\,T_{k}\left(n\right)})_{+}-\mathbf{E}\left[(X_{k}-\eta_{k,\,T_{k}(n)})_{+}\right]\right\} \\
 & +(1-\alpha)^{-1}\left\{ \mathbf{E}\left[(X_{k}-\eta_{k,\,T_{k}(n)})_{+}\right]-\mathbf{E}\left[(X_{k}-\eta_{k})_{+}\right]\right\} .
\end{align*}
We will bound the last three terms in the right side of the equation above. The last term in the above equation is bounded below by
\begin{align}\label{lemma 4-3}
& -(1-\alpha)^{-1}\mathbf{E}\left(\left|\eta_{k,\,T_{k}\left(n\right)}-\eta_{k}\right|\right) \nonumber \\
\leq & (1-\alpha)^{-1}\left\{ \mathbf{E}\left[(X_{k}-\eta_{k,\,T_{k}(n)})_{+}\right]-\mathbf{E}\left[(X_{k}-\eta_{k})_{+}\right]\right\} \nonumber \\
\leq & (1-\alpha)^{-1}\mathbf{E}\left(\left|\eta_{k,\,T_{k}\left(n\right)}-\eta_{k}\right|\right).
\end{align}
Let ${2\delta}/{n^{2}K}$ replace $\delta$ in Lemma \ref{lemma 2}, we further obtain the following lower and upper bounds
\begin{align}\label{lemma 4-4}
& -(1-\alpha)^{-1}\left(2M\left(\alpha\right)\sqrt{\frac{\log\frac{2n^{2}K}{\delta}}{2T_{k}\left(n\right)}}
+\frac{2\delta}{n^{2}K}M\right) \nonumber \\
\leq & (1-\alpha)^{-1}\left\{ \mathbf{E}\left[(X_{k}-\eta_{k,\,T_{k}(n)})_{+}\right]-\mathbf{E}\left[(X_{k}-\eta_{k})_{+}\right]\right\} \nonumber \\
\leq & (1-\alpha)^{-1}\left(2m\left(\alpha\right)\sqrt{\frac{\log\frac{2n^{2}K}{\delta}}{2T_{k}\left(n\right)}}
+\frac{2\delta}{n^{2}K}M\right).
\end{align}
Meanwhile, restricted to the subset $\Omega_{n}^{C}$ and for $t=T_{k,\,n}$, it is true that
\begin{equation}\label{lemma 4-5}
-2m(\alpha)\sqrt{\frac{\log\frac{2n^{2}K}{\delta}}{2T_{k}(n)}}\leq\eta_{k,\,T_{k}(n)}
-\eta_{k}\leq2m(\alpha)\sqrt{\frac{\log\frac{2n^{2}K}{\delta}}{2T_{k}(n)}},
\end{equation}
and
\begin{align}\label{lemma 4-6}
-(1-\alpha)^{-1}M\sqrt{\frac{\log\frac{2n^{2}K}{\delta}}{2T_{k}(n)}} & \leq(1-\alpha)^{-1}\left\{ \frac{1}{T_{k}(n)}\sum_{t=1}^{T_{k}(n)}(x_{k,\,t}-\eta_{k,\,T_{k}(n)})_{+}-\mathbf{E}\left[(X_{k}-\eta_{k,\,T_{k}(n)})_{+}\right]\right\} \nonumber \\
& \leq(1-\alpha)^{-1}M\sqrt{\frac{\log\frac{2n^{2}K}{\delta}}{2T_{k}(n)}}.
\end{align}
Combining (\ref{lemma 4-3})-(\ref{lemma 4-6}), we see that $\widehat{\rho}_{k,\,n}^{C}$ is bounded. Then we conclude that the following relation holds for the index for any arm $k$, $k=1,\ldots,K$ with probability at least $(1-3\delta/n)$, in particular, for the optimal arm $k^{*}$
\begin{align*}
\rho_{k}^{C}
& -2\left(2\left[1+\left(1-\alpha\right)^{-1}\right]m\left(\alpha\right)+\left(1-\alpha\right)^{-1}M\right)\sqrt{\frac{\log\frac{2n^{2}K}{\delta}}{2T_{k}(n)}}-(1-\alpha)^{-1}\frac{2\delta}{n^{2}K}M \\
& \leq B_{k,\,n}^{C} \leq \rho_{k}^{C}+(1-\alpha)^{-1}\frac{2\delta}{n^{2}K}M.
\end{align*}
\noindent At any time $n+1$, if arm $k\neq k^{*}$ is pulled, i.e. $I_{n}=k$, then the index of arm $k$ must be lower than that of any other arm, including the optimal arm $k^{*}$. So we must have
\begin{align*}
B_{k,n}^{C}
& =\widehat{\rho}_{k,\,n}^{C}-(1-\alpha)^{-1}\left(1-\frac{3\delta}{n}\right)M\sqrt{\frac{\log\frac{2n^{2}K}{\delta}}{2T_{k}(n)}} \\
& \leq \widehat{\rho}_{k^{*},\,n}^{C}-(1-\alpha)^{-1}\left(1-\frac{3\delta}{n}\right)M\sqrt{\frac{\log\frac{2n^{2}K}{\delta}}{2T_{k}(n)}}=B_{k^{*},\,n}^{C},
\end{align*}
which means, with probability at least $(1-3\delta/n)$, the following relation holds
\begin{align}\label{lemma 4-7}
\rho_{k}^{C}
& -2\left(2\left[1+\left(1-\alpha\right)^{-1}\right]m\left(\alpha\right)+\left(1-\alpha\right)^{-1}M\right)\sqrt{\frac{\log\frac{2n^{2}K}{\delta}}{2T_{k}\left(n\right)}} \nonumber \\
& -(1-\alpha)^{-1}\frac{2\delta}{n^{2}K}M \leq \rho_{k^{*}}^{C}+(1-\alpha)^{-1}\frac{2\delta}{n^{2}K}M.
\end{align}
Relation (\ref{lemma 4-7}) directly leads to conclusion (\ref{lemma 4-1}). Conclusion
(\ref{lemma 4-2}) follows by noting that $T_{k}(n)$ is automatically bounded by $n$.
\hfill\rule{1.5mm}{3mm}
\end{proof}

\noindent \textbf{Step 3: Establishing an upper bound on the pseudo regret}. The convergence of the sequence
$\{ \eta_{\varphi,\,n}\} _{n=1}^{\infty}$ is necessary for our derivation of the upper bound on the pseudo regret. Under general
conditions, there is no guarantee of the convergence of the sequence $\eta_{\varphi,\,n}$ as $n\rightarrow\infty$.
However, if we let $k_{n}\triangleq\sum_{k\neq k^{*}}T_{k}(n)$ be the total number of pulls from sub-optimal arms, then we see that $k_{n}=O\left(\log n\right)$. Based on this fact, the following lemma establishes the convergence of the sequence
$\{\eta_{\varphi,\,n}\} _{n=1}^{\infty}$.

\begin{lemma}\label{lemma 5}
If $k_{n}$ satisfies $k_{n}=O\left(\log n\right)$, then
\begin{equation}\label{lemma 5-1}
\eta_{\varphi,\,n}\overset{\mathbf{P}}{\rightarrow}\eta^{*},
\end{equation}
\noindent as $n\rightarrow\infty$, and $\forall\:n\geq 1$, with
probability at least $1-\delta$ we have
\begin{equation}\label{lemma 5-2}
\left|\eta_{\varphi,\,n}-\eta^{*}\right|\leq\max\left\{ \alpha,\:1-\alpha\right\} O\left(\frac{\log n}{n}\right)m\left(\alpha\right)+2m\left(\alpha\right)\sqrt{\frac{\log\frac{4}{\delta}}{2n}}.
\end{equation}
\end{lemma}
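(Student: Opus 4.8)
The plan is to reduce the study of $\eta_{\varphi,\,n}$, which is the $\alpha$-quantile of the \emph{mixed} sample $\{x_{I_{t},\,t}\}_{t=1}^{n}$, to that of an ordinary empirical quantile of an i.i.d.\ sample from $F_{k^{*}}$, so that Lemma \ref{lemma 2} can be applied to arm $k^{*}$. Writing $\theta_{n}\triangleq k_{n}/n$ and splitting $n F_{\varphi,\,n}(x)=\sum_{t:I_{t}=k^{*}}I\{x_{I_{t},\,t}\leq x\}+\sum_{t:I_{t}\neq k^{*}}I\{x_{I_{t},\,t}\leq x\}$, the first sum equals $T_{k^{*}}(n)F_{k^{*},\,T_{k^{*}}(n)}(x)$ and the second lies in $[0,\,k_{n}]$, so that $(1-\theta_{n})F_{k^{*},\,T_{k^{*}}(n)}(x)\leq F_{\varphi,\,n}(x)\leq(1-\theta_{n})F_{k^{*},\,T_{k^{*}}(n)}(x)+\theta_{n}$ for every $x$. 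Inverting these two inequalities sandwiches the quantile:
\begin{equation*}
F_{k^{*},\,T_{k^{*}}(n)}^{-1}\!\left(\tfrac{\alpha-\theta_{n}}{1-\theta_{n}}\right)\leq\eta_{\varphi,\,n}\leq F_{k^{*},\,T_{k^{*}}(n)}^{-1}\!\left(\tfrac{\alpha}{1-\theta_{n}}\right),
\end{equation*}
which is useful once $\theta_{n}<\min\{\alpha,\,1-\alpha\}$, and this holds for all large $n$ since $k_{n}=O(\log n)$ forces $\theta_{n}\to0$; for the finitely many small $n$ the crude bound $|\eta_{\varphi,\,n}-\eta^{*}|\leq M$ suffices after enlarging the implicit constant in $O(\log n/n)$.

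Next I would control the perturbation of the quantile level. Since $\tfrac{\alpha}{1-\theta_{n}}-\alpha=\tfrac{\alpha\theta_{n}}{1-\theta_{n}}$ and $\alpha-\tfrac{\alpha-\theta_{n}}{1-\theta_{n}}=\tfrac{(1-\alpha)\theta_{n}}{1-\theta_{n}}$, both levels appearing in the sandwich differ from $\alpha$ by at most $\tfrac{\theta_{n}}{1-\theta_{n}}\max\{\alpha,\,1-\alpha\}$; this is precisely the source of the factor $\max\{\alpha,\,1-\alpha\}$ in \eqref{lemma 5-2}. For a level $\alpha'$ in this range I would bound $|F_{k^{*},\,T_{k^{*}}(n)}^{-1}(\alpha')-\eta^{*}|$ by (i) $|F_{k^{*}}^{-1}(\alpha')-F_{k^{*}}^{-1}(\alpha)|\leq\bar{m}\,|\alpha'-\alpha|$, where $\bar{m}$ is the supremum of $[f_{k^{*}}(F_{k^{*}}^{-1}(\cdot))]^{-1}$ over a fixed neighbourhood of $\alpha$ (finite by Assumption \ref{CVaR assumption 1}), which contributes the term $\max\{\alpha,\,1-\alpha\}\,O(\log n/n)\,m(\alpha)$; and (ii) the sample-quantile fluctuation $|F_{k^{*},\,T_{k^{*}}(n)}^{-1}(\alpha')-F_{k^{*}}^{-1}(\alpha')|$, handled exactly as in the proof of Lemma \ref{lemma 2} (Bahadur representation plus Hoeffding's inequality, at level $\alpha'$ and sample size $T_{k^{*}}(n)$), which is at most $\frac{2}{f_{k^{*}}(F_{k^{*}}^{-1}(\alpha'))}\sqrt{\log(4/\delta)/(2T_{k^{*}}(n))}\leq 2m(\alpha)\sqrt{\log(4/\delta)/(2T_{k^{*}}(n))}$ with probability at least $1-\delta$. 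Because $T_{k^{*}}(n)=n-k_{n}\geq n-O(\log n)$, one has $\sqrt{1/T_{k^{*}}(n)}=\sqrt{1/n}\,(1+O(\log n/n))$, so term (ii) equals $2m(\alpha)\sqrt{\log(4/\delta)/(2n)}$ up to a contribution of order $O(\log n/n)$ that is absorbed; combining (i), (ii) and the sandwich gives \eqref{lemma 5-2}.

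For \eqref{lemma 5-1} I would simply let $n\to\infty$ in the sandwich: $\theta_{n}\to0$, so the two perturbed levels converge to $\alpha$; $T_{k^{*}}(n)\geq n-k_{n}\to\infty$; and by Glivenko--Cantelli applied to the i.i.d.\ arm-$k^{*}$ subsample together with the continuity of $F_{k^{*}}^{-1}$ at $\alpha$ (which holds since $f_{k^{*}}(\eta^{*})\geq 1/m(\alpha)>0$ under Assumption \ref{CVaR assumption 1}), both endpoints of the sandwich converge in probability to $F_{k^{*}}^{-1}(\alpha)=\eta^{*}$, hence so does $\eta_{\varphi,\,n}$.

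The step I expect to be the main obstacle is the two-stage quantile perturbation in the second paragraph: converting the $O(k_{n}/n)$ additive contamination of $F_{\varphi,\,n}$ into an $O(k_{n}/n)$ shift of the quantile \emph{level} — asymmetric in $\alpha$ versus $1-\alpha$, which is exactly where $\max\{\alpha,\,1-\alpha\}$ enters — and then into an $O(k_{n}/n)$ shift of the quantile \emph{value} via the density lower bound from Assumption \ref{CVaR assumption 1}. A secondary, more routine bookkeeping point is that the sample size $T_{k^{*}}(n)$ is random, so invoking Lemma \ref{lemma 2} cleanly requires either a union bound over its $O(\log n)$ possible values (harmless inside the logarithm) or running the Bahadur argument over that range directly.
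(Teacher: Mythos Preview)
Your proposal is correct and follows essentially the same route as the paper: sandwich $\eta_{\varphi,\,n}$ between two empirical quantiles of the arm-$k^{*}$ subsample at perturbed levels, then invoke Lemma~\ref{lemma 2} for the sample-quantile fluctuation and the density bound from Assumption~\ref{CVaR assumption 1} (via the mean value theorem) for the level shift. The paper uses the perturbed levels $(1+k_{n}/n)\alpha-k_{n}/n$ and $(1+k_{n}/n)\alpha$, which agree with your $(\alpha-\theta_{n})/(1-\theta_{n})$ and $\alpha/(1-\theta_{n})$ to first order in $\theta_{n}$, and it silently replaces the sample size $n-k_{n}$ by $n$ in the $\sqrt{\,\cdot\,}$ term---a point you handle explicitly.
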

\begin{proof}
\noindent First we separate the observation sequence up to time $n$ into two parts: $k_{n}$ observations from the sub-optimal arms and
$(n-k_{n})$ observations from the optimal arm. By considering the extreme cases where all the observations in the latter category are
larger or smaller than those in the former, we conclude that
\begin{equation*}
F_{k^{*},\,\left(n-k_{n}\right)}^{-1}\left[\left(1+\frac{k_{n}}{n}\right)\alpha-\frac{k_{n}}{n}\right]\leq\eta_{\varphi,\,n}\leq F_{k^{*},\,\left(n-k_{n}\right)}^{-1}\left[\left(1+\frac{k_{n}}{n}\right)\alpha\right].
\end{equation*}
Let $\alpha_{1}=(1+k_{n}/{n})\alpha-k_{n}/{n}$
and $\alpha_{2}=(1+k_{n}/{n})\alpha$. Based on Lemma
\ref{lemma 2}, for all $n\geq 1$, the following bounds on $\eta_{\varphi,\,n}$ hold with probability
at least $1-\delta$
\begin{align*}
 & F_{k^{*}}^{-1}(\alpha_{1})-\frac{2}{f_{k^{*}}\left(F_{k^{*}}^{-1}(\alpha_{1})\right)}\sqrt{\frac{\log\frac{4}{\delta}}{2n}}\leq\eta_{\varphi,\,n}\leq F_{k^{*}}^{-1}(\alpha_{2})+\frac{2}{f_{k^{*}}\left(F_{k^{*}}^{-1}(\alpha_{2})\right)}\sqrt{\frac{\log\frac{4}{\delta}}{2n}}.
\end{align*}
Considering $\eta^{*}=F_{k^{*}}^{-1}(\alpha)$, the following bound on $|\eta_{\varphi,\,n}-\eta^{*}|$ holds with probability $1-\delta$
\begin{equation*}
\left|\eta_{\varphi,\,n}-\eta^{*}\right|\leq\max\left\{ \begin{array}{c}
F_{k^{*}}^{-1}\left(\alpha\right)-F_{k^{*}}^{-1}\left(\alpha_{1}\right)+\frac{2}{f_{k^{*}}\left(F_{k^{*}}^{-1}(\alpha_{1})\right)}\sqrt{\frac{\log\frac{4}{\delta}}{2n}},\\
F_{k^{*}}^{-1}\left(\alpha_{2}\right)-F_{k^{*}}^{-1}\left(\alpha\right)+\frac{2}{f_{k^{*}}\left(F_{k^{*}}^{-1}(\alpha_{2})\right)}\sqrt{\frac{\log\frac{4}{\delta}}{2n}}
\end{array}\right\}.
\end{equation*}
Under Assumption \ref{CVaR assumption 1}, from the mean value theorem and the condition $k_{n}=O(\log n)$,
as $n\rightarrow\infty$ we have
\begin{equation*}
F_{k^{*}}^{-1}(\alpha)-F_{k^{*}}^{-1}\left(\alpha_{1}\right)=O(\frac{k_{n}}{n})(1-\alpha)\frac{1}{f_{k}^{*}(\eta_{k})}\leq O(\frac{k_{n}}{n})(1-\alpha)m(\alpha),
\end{equation*}
and
\begin{equation*}
F_{k^{*}}^{-1}(\alpha_{2})-F_{k^{*}}^{-1}(\alpha)\leq O\left(\frac{\log n}{n}\right)\alpha m\left(\alpha\right).
\end{equation*}
The desired conclusion then follows.
\hfill\rule{1.5mm}{3mm}
\end{proof}
With upper confidence bounds on both $|\eta_{\varphi,\,n}-\eta^{*}|$ and $T_{k}\left(n\right)$,
we can now derive the upper bound on the pseudo regret $R_{n}^{C}(\varphi)$ in Theorem \ref{CVaR theorem}.

\begin{proof}[\textup{\textbf{Proof of Theorem \ref{CVaR theorem}}}]
Based on Lemma \ref{lemma 3}, restricted to the set $\Omega_{n}^{C}$, $\left|\eta_{\varphi,\,n}-\eta^{*}\right|$ can be upper bounded by $M_{\varphi}(n)$ with probability at least $1-{\delta}/{n}$.
Define the subset by $\varLambda_{n}\subseteq\Omega_{n}$ where $|\eta_{\varphi,\,n}-\eta^{*}|$ is bounded, then $\mathbf{P}\{ \varLambda_{n}\} \geq1-4\delta/n$.
So with probability at least $(1-{\delta}/{n})\mathbf{P}\{ \Omega_{n}^{C}\} =(1-{\delta}/{n})(1-{3\delta}/{n})\geq1-{4\delta}/{n}$,
$|\eta_{\varphi,\,n}-\eta^{*}|$ can be upper bounded by $M_{\varphi}(n)$. We first decompose and bound the pseudo regret with
\begin{align}\label{CVaR theorem 1}
R_{n}^{C}(\varphi) \leq &\mathbf{E}(|\eta_{\varphi,\,n}-\eta^{*}||\varLambda_{n})+\mathbf{E}(|\eta_{\varphi,\,n}-\eta^{*}||\varLambda_{n}^{c})\nonumber\\
& +(1-\alpha)^{-1}\frac{1}{n}\sum_{k=1}^{K}\mathbf{E}T_{k}(n)\mathbf{E}[(X_{k}-\eta_{\varphi,\,n})_{+}-(X_{k^{*}}-\eta^{*})_{+}|\varLambda_{n}]\nonumber\\
& +(1-\alpha)^{-1}\frac{1}{n}\sum_{k=1}^{K}\mathbf{E}T_{k}(n)\mathbf{E}[(X_{k}-\eta_{\varphi,\,n})_{+}-(X_{k^{*}}-\eta^{*})_{+}|\varLambda_{n}^{c}].
\end{align}
Note that
\begin{align*}
\mathbf{E}\left[(X_{k}-\eta_{\varphi,\,n})_{+}-(X_{k^{*}}-\eta^{*})_{+}\right]
& \leq\mathbf{E}\left[(X_{k}-\eta_{k})_{+}\right]-\mathbf{E}\left[(X_{k^{*}}-\eta^{*})_{+}\right] + \left|\eta_{\varphi n}-\eta_{k}\right| \\
& \leq\Delta_{k}^{C}+M.
\end{align*}
Meanwhile, since $\left|\eta_{\varphi,\,n}-\eta^{*}\right|$ is naturally bounded by $M$ and $\mathbf{E}T_{k}(n)$ is bounded by $M_{k}(n)$,
each term in (\ref{CVaR theorem 1}) can now be bounded. Then, we have
\begin{align*}
R_{n}^{C}(\varphi) \leq
& (1-\frac{4\delta}{n})M_{\varphi}(n)+\frac{4\delta}{n}M +(1-\alpha)^{-1}(1-\frac{4\delta}{n})\frac{1}{n}\sum_{k\neq k^{*}}M_{k}(n)(\Delta_{k}^{C}+M) \\
& +(1-\alpha)^{-1}\frac{4\delta}{n}(\Delta_{k}^{C}+M).
\end{align*}
\noindent The conclusion then follows from combining the terms in the above inequality together.
\hfill\rule{1.5mm}{3mm}
\end{proof}

\subsection{Mean-deviation risk measure}\label{Mean-deviation risk measure proof}

In this section, we first give some preliminary results in Lemma \ref{lemma 6} and Lemma \ref{lemma 8}. Then in Lemma \ref{lemma 9}, we derive the upper bound on the number of pulls of each sub-optimal arm. In the end of the section, we give the proof of Theorem \ref{MD theorem}.

\noindent\textbf{Step 1: Preliminary results}. We begin with the following lemma.
\begin{lemma}\label{lemma 6}
\noindent Let $f(x)=x^{p}$ and $g(x)=x^{\frac{1}{p}}$ be defined
on $[0,\:+\infty)$, where $p\geq1$. Then, for any $x,\:y\in[0,\:+\infty)$, we have
\begin{align*}
\left|f\left(x\right)-f\left(y\right)\right| & \leq f^{'}\left(\max\left\{ x,\,y\right\} \right)\left|x-y\right|,\\
\left|g\left(x\right)-g\left(y\right)\right| & \leq g\left(\left|x-y\right|\right),
\end{align*}
\noindent specifically, if $x,\:y\in\left[0,\:M\right]$, then \textup{$\left|f\left(x\right)-f\left(y\right)\right|\leq pM^{p-1}\left|x-y\right|$.}
\end{lemma}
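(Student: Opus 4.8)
The plan is to dispatch the three assertions in turn, each reducing to an elementary monotonicity or convexity property of power functions; there is no real obstacle here, only bookkeeping with the exponents $p-1\ge 0$ and $1/p\le 1$.

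First I would prove the bound on $f$. Without loss of generality take $x\ge y\ge 0$, so $|x-y|=x-y$ and $\max\{x,y\}=x$. Since $p\ge 1$, the derivative $f'(t)=pt^{p-1}$ is nonnegative and nondecreasing on $[0,\infty)$, hence by the fundamental theorem of calculus $0\le f(x)-f(y)=\int_y^x f'(t)\,dt\le f'(x)(x-y)$, which is exactly $|f(x)-f(y)|\le f'(\max\{x,y\})|x-y|$. The third claim is then immediate: if in addition $x,y\in[0,M]$ then $\max\{x,y\}\le M$, and monotonicity of $t\mapsto t^{p-1}$ gives $f'(\max\{x,y\})=p(\max\{x,y\})^{p-1}\le pM^{p-1}$, so $|f(x)-f(y)|\le pM^{p-1}|x-y|$.

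Next I would prove the bound on $g$. Again take $x\ge y\ge 0$. The key point is that $g(t)=t^{1/p}$ is concave on $[0,\infty)$ (because $1/p\le 1$) with $g(0)=0$, and any such function is subadditive: for $u,v\ge 0$ with $u+v>0$, concavity gives $g(u)=g\!\left(\tfrac{u}{u+v}(u+v)+\tfrac{v}{u+v}\cdot 0\right)\ge \tfrac{u}{u+v}g(u+v)$ and likewise $g(v)\ge \tfrac{v}{u+v}g(u+v)$, and adding yields $g(u)+g(v)\ge g(u+v)$ (the case $u=v=0$ being trivial). Applying this with $u=x-y$ and $v=y$ gives $g(x)=g\big((x-y)+y\big)\le g(x-y)+g(y)$, i.e. $g(x)-g(y)\le g(x-y)=g(|x-y|)$; running the symmetric argument with the roles of $x$ and $y$ exchanged gives $g(y)-g(x)\le g(|x-y|)$, and together these yield $|g(x)-g(y)|\le g(|x-y|)$.

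If one prefers to avoid invoking concavity, the $g$-inequality can instead be obtained by substituting $a=x^{1/p}\ge b=y^{1/p}\ge 0$, which reduces it to $(a-b)^p\le a^p-b^p$; this follows since $\varphi(a):=a^p-(a-b)^p-b^p$ satisfies $\varphi(b)=0$ and $\varphi'(a)=p\,a^{p-1}-p\,(a-b)^{p-1}\ge 0$ for $a\ge b$, again by monotonicity of $t\mapsto t^{p-1}$. Either route is routine calculus; the only mild care needed is to keep straight which power function is convex (exponent $\ge 1$) and which is concave/subadditive (exponent $\le 1$).
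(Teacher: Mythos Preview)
Your proof is correct and follows exactly the route the paper indicates: the paper merely states that the conclusion ``follows directly from the convexity of $f(x)$ and the concavity of $g(x)$'' and omits the details as trivial, and your argument is precisely a clean fleshing-out of that remark (nondecreasing derivative for the convex $f$, subadditivity from concavity plus $g(0)=0$ for $g$). There is nothing to add.
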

The conclusion in Lemma \ref{lemma 6} follows directly from the convexity of $f(x)$ and the concavity
of $g(x)$. The proof is trivial so we omit it. We also need the preliminary result in the following lemma.
\begin{lemmastar}\label{lemma 8}
For $x\geq0$, if
\begin{equation*}
ax+bx^{\frac{1}{p}}\geq c,
\end{equation*}
\noindent where $a$, $b$ and $c$ are fixed constants, $p\geq1$, then we can conclude that
\begin{equation*}
x\geq\min\left\{ 1,\:\frac{c}{a+b},\:\left(\frac{c}{a+b}\right)^{p}\right\} .
\end{equation*}
\end{lemmastar}

\noindent\textbf{Step 2: Bounding the number of pulls of each sub-optimal arm}. For further discussion, we define the following key subset
\begin{equation*}
\Omega_{n}^{M}\triangleq\left\{ \begin{array}{c}
\forall\:i=k,\ldots,K,\;\forall t=1,\ldots,n,\:s.t.\:|\overline{x}_{k,\,t}-\mu_{k}|\leq M\sqrt{\frac{\log\frac{4n^{2}K}{\delta}}{2t}}\\
\;and\;\left|\frac{1}{t}\sum_{s=1}^{t}\left|x_{k,\,s}-\mu_{k}\right|^{p}-\mathbf{E}\left|X_{k}-\mu_{k}\right|^{p}\right|\leq M^{p}\sqrt{\frac{\log\frac{4n^{2}K}{\delta}}{2t}}
\end{array}\right\}.
\end{equation*}
The subset $\Omega_{n}^{M}$ has the same meaning as $\Omega_{n}^{C}$, which represents the set of realizations where the empirical risk of each arm is close enough to the real risk at each of the first $n$ times. By establishing the lower bound of the probability of $\Omega_{n}^{M}$ in the following lemma, we may show that the
$\widehat{\rho}_{k,\,n}^{M}$ approximates $\rho_{k}^{M}$ with a high probability.
\begin{lemmastar}\label{lemma 7}
For all $n\geq 1$, we have $\mathbf{P}\{\Omega_{n}^{M}\}\geq1-{\delta}/{n}$.
\end{lemmastar}
\noindent From the proof of Lemma \ref{lemma 7}, we have shown that
$\varepsilon_{\rho}^{M}$ satisfies Condition 1. In the following lemma, we bound the number of pulls from the sub-optimal arms. We let
\begin{equation*}
m_{k}\triangleq(\min\{ 1,\:\frac{\rho_{k}^{M}-\rho_{k^{*}}^{M}}{2M[1+(1+p)^{1/p}]},\:\frac{(\rho_{k}^{M}-\rho_{k^{*}}^{M})^{p}}{2^{p}M^{p}[1+(1+p)^{1/p}]^{p}}\} )^{-2},
\end{equation*}
for all $k=1,\ldots,K$.

\begin{lemma}\label{lemma 9}
\noindent For any sub-optimal arm $k\neq k^{*}$, the following relation holds with probability at least $1-{\delta}/{n}$
\begin{equation}\label{lemma 9-1}
T_{k}(n)\leq m_{k}\log\frac{4n^{2}K}{\delta}.
\end{equation}
Furthermore,
\begin{equation}\label{lemma 9-2}
\mathbf{E}T_{k}(n)\leq\left(1-\frac{\delta}{n}\right)m_{k}\log\frac{4n^{2}K}{\delta}+\delta.
\end{equation}
Specifically, we have $\mathbf{E}T_{i}(n)\leq O\left(\log n\right)$.
\end{lemma}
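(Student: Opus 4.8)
The plan is to mimic the structure of the proof of Lemma~\ref{lemma 4}, adapting the argument to the MD risk measure. The key is to work on the event $\Omega_{n}^{M}$, where by definition both $|\overline{x}_{k,\,t}-\mu_{k}|$ and $|\frac{1}{t}\sum_{s=1}^{t}|x_{k,\,s}-\mu_{k}|^{p}-\mathbf{E}|X_{k}-\mu_{k}|^{p}|$ are controlled at all times $t\le n$ for every arm. First I would establish a two-sided bound on $\widehat{\rho}_{k,\,n}^{M}$ in terms of $\rho_{k}^{M}$ plus an error term of order $\sqrt{\log(4n^{2}K/\delta)/T_{k}(n)}$. To do this I would decompose $\widehat{\rho}_{k,\,n}^{M}-\rho_{k}^{M}$ into (i) the empirical-mean deviation $\overline{x}_{k,\,T_{k}(n)}-\mu_{k}$ and (ii) the deviation of the empirical $L_p$ norm from $\|X_k\|_p$. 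For (ii), I would further split it: replace the center $\overline{x}_{k,\,T_{k}(n)}$ by $\mu_{k}$ inside $\frac{1}{T_k(n)}\sum|x_{k,\,t}-\overline{x}_{k,\,T_{k}(n)}|^{p}$ (incurring an error bounded via Lemma~\ref{lemma 6}, since $|\,|x-\overline{x}|^{p}-|x-\mu_k|^{p}\,|\le pM^{p-1}|\overline{x}-\mu_k|$ on $[0,M]$), then compare $\frac{1}{T_k(n)}\sum|x_{k,\,t}-\mu_k|^{p}$ with $\mathbf{E}|X_k-\mu_k|^{p}$ using the second clause of $\Omega_{n}^{M}$, and finally take $p$-th roots using the concavity bound $|g(x)-g(y)|\le g(|x-y|)$ from Lemma~\ref{lemma 6}. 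Collecting terms, on $\Omega_{n}^{M}$ one gets $|\widehat{\rho}_{k,\,n}^{M}-\rho_{k}^{M}|\le M\sqrt{\log(4n^{2}K/\delta)/(2T_k(n))} + M[(p+1)\sqrt{\log(4n^{2}K/\delta)/(2T_k(n))}]^{1/p}$, which is exactly $\varepsilon_{\rho}^{M}(n,T_k(n),K,\delta)$ up to sign — this simultaneously verifies Condition~1 for the MD case.

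Next I would run the index-comparison argument. If arm $k\neq k^{*}$ is pulled at time $n+1$, then $B_{k,\,n}^{M}\le B_{k^{*},\,n}^{M}$, i.e. $\widehat{\rho}_{k,\,n}^{M}-\varepsilon_{\rho}^{M}(n,T_k(n),K,\delta)\le \widehat{\rho}_{k^{*},\,n}^{M}-\varepsilon_{\rho}^{M}(n,T_{k^{*}}(n),K,\delta)$. On $\Omega_{n}^{M}$, lower-bound the left side by $\rho_{k}^{M}-2\varepsilon_{\rho}^{M}(n,T_k(n),K,\delta)$-type quantity and upper-bound the right side by $\rho_{k^{*}}^{M}$ plus a vanishing term, so that $\rho_{k}^{M}-\rho_{k^{*}}^{M}\le 2M\sqrt{\log(4n^{2}K/\delta)/(2T_k(n))}+2M[(p+1)\sqrt{\log(4n^{2}K/\delta)/(2T_k(n))}]^{1/p}$ (absorbing constants into the $[1+(1+p)^{1/p}]$ factor appearing in $m_k$). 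Writing $x = \big(\log(4n^{2}K/\delta)/T_k(n)\big)$-scaled quantity, this inequality has precisely the form $ax + bx^{1/p}\ge c$ treated in Lemma~\ref{lemma 8}, whose conclusion yields $T_k(n)\le m_k\log(4n^{2}K/\delta)$, establishing~\eqref{lemma 9-1}. Inequality~\eqref{lemma 9-2} then follows by the standard splitting $\mathbf{E}T_k(n) = \mathbf{E}[T_k(n)\,;\,\Omega_{n}^{M}] + \mathbf{E}[T_k(n)\,;\,(\Omega_{n}^{M})^{c}] \le (1-\delta/n)m_k\log(4n^{2}K/\delta) + (\delta/n)\cdot n = (1-\delta/n)m_k\log(4n^{2}K/\delta)+\delta$, using $\mathbf{P}\{(\Omega_{n}^{M})^{c}\}\le\delta/n$ from Lemma~\ref{lemma 7} and $T_k(n)\le n$. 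The order statement $\mathbf{E}T_k(n)\le O(\log n)$ is immediate.

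The main obstacle is the bookkeeping around the $p$-th root and the re-centering step: one must carefully track how the error from replacing $\overline{x}_{k,\,T_k(n)}$ by $\mu_k$ inside the $L_p$ average propagates through $g(x)=x^{1/p}$, and verify that the constant bundle matches exactly the factor $2M[1+(1+p)^{1/p}]$ (resp. its $p$-th power) that defines $m_k$ via Lemma~\ref{lemma 8}. A secondary subtlety is that, unlike the mean, the empirical $L_p$ quantity is not a simple average of i.i.d. terms once the data-dependent center is plugged in; the decomposition above is what isolates the genuinely i.i.d. piece $\frac{1}{T_k(n)}\sum|x_{k,\,t}-\mu_k|^{p}$ so that the Hoeffding-type bound baked into $\Omega_{n}^{M}$ can be applied. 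Everything else is a routine transcription of the CVaR argument.
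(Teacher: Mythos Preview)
Your proposal is correct and follows essentially the same route as the paper's proof: work on $\Omega_n^M$, bound the mean part directly, handle the $L_p$ part by first passing from the $p$-th-root difference to the difference of $p$-th powers via Lemma~\ref{lemma 6}, then re-center from $\overline{x}_{k,T_k(n)}$ to $\mu_k$ (again Lemma~\ref{lemma 6}) so that the Hoeffding clause of $\Omega_n^M$ applies, run the index-comparison inequality, and finish with Lemma~\ref{lemma 8}; the expectation bound via the $\Omega_n^M/(\Omega_n^M)^c$ split is exactly what the paper does as well. The only thing to watch is the constant bookkeeping you flag yourself---the paper substitutes $x=\sqrt{T_k(n)^{-1}\log(4n^2K/\delta)}$, $a=2M$, $b=2M(p+1)^{1/p}$, $c=\rho_k^M-\rho_{k^*}^M$ in Lemma~\ref{lemma 8}, so make sure your factors of $2$ line up with the definition of $m_k$.
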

\begin{proof}
The main idea of the proof is the same as Lemma \ref{lemma 4}. Our task is to establish the upper and lower bounds for the index of arm $k$ based on the bound of $|\widehat{\rho}_{k,\,n}^{M}-\rho_{k}^{M}|$, for all $k=1,\ldots,K$. The empirical estimate $\widehat{\rho}_{k,\,n}^{M}$ consists of two terms, we first bound these two terms in this proof. Restricted to the set $\Omega_{n}^{M}$, we have
\begin{equation*}
-M\sqrt{\frac{\log\frac{4n^{2}K}{\delta}}{2T_{k}(n)}}\leq\overline{x}_{k,\,T_{k}(n)}-\mu_{k}\leq M\sqrt{\frac{\log\frac{4n^{2}K}{\delta}}{2T_{k}(n)}}.
\end{equation*}
So we see that the first term of $\widehat{\rho}_{k,\,n}^{M}$ is bounded. From Lemma \ref{lemma 6}, we have
\begin{equation*}
\left|\left(\frac{1}{T_{k}(n)}\sum_{t=1}^{T_{k}(n)}\left|x_{k,\,t}-\overline{x}_{k,\,T_{k}(n)}\right|^{p}\right)^{\frac{1}{p}}-\left\Vert X_{k}\right\Vert _{p}\right|\leq\left|\frac{1}{T_{k}(n)}\sum_{t=1}^{T_{k}(n)}\left|x_{k,\,t}-\overline{x}_{k,\,T_{i}(n)}\right|^{p}-\mathbf{E}\left|X_{k}-\mu_{k}\right|^{p}\right|^{\frac{1}{p}}.
\end{equation*}
The right side of the inequality above is further bounded by
\begin{align*}
& \left|\frac{1}{T_{k}(n)}\sum_{t=1}^{T_{k}(n)}\left|x_{k,\,t}-\overline{x}_{k,\,T_{k}(n)}\right|^{p}-\mathbf{E}\left|X_{k}-\mu_{k}\right|^{p}\right|\\ \leq & \left|\frac{1}{T_{k}(n)}\sum_{t=1}^{T_{k}(n)}\left|x_{k,\,t}-\overline{x}_{k,\,T_{k}(n)}\right|^{p}-\frac{1}{T_{k}(n)}\sum_{t=1}^{T_{k}(n)}\left|x_{k,\,t}-\mu_{k}\right|^{p}\right|\\
& +\left|\frac{1}{T_{k}(n)}\sum_{t=1}^{T_{k}(n)}\left|x_{k,\,t}-\mu_{k}\right|^{p}-\mathbf{E}\left|X_{k}-\mu_{k}\right|^{p}\right|\\
\leq & \frac{1}{T_{k}(n)}\sum_{t=1}^{T_{k}(n)}\left|\left|x_{k,\,t}-\overline{x}_{k,\,T_{k}(n)}\right|^{p}-\left|x_{k,\,t}-\mu_{k}\right|^{p}\right|
+M^{p}\sqrt{\frac{\log\frac{4n^{2}K}{\delta}}{2T_{k}(n)}}\\
\leq & \frac{1}{T_{k}(n)}\sum_{t=1}^{T_{k}(n)}pM^{p-1}\left|\left|x_{k,\,t}-\overline{x}_{k,\,T_{k}(n)}\right|-\left|x_{k,\,t}-\mu_{k}\right|\right|
 +M^{p}\sqrt{\frac{\log\frac{4n^{2}K}{\delta}}{2T_{k}(n)}}\\
 \leq & \frac{1}{T_{k}(n)}\sum_{t=1}^{T_{k}(n)}pM^{p-1}\left|\overline{x}_{k,\,T_{k}(n)}-\mu_{k}\right|+M^{p}\sqrt{\frac{\log\frac{4n^{2}K}{\delta}}{2T_{k}(n)}}
 \leq \left(p+1\right)M^{p}\sqrt{\frac{\log\frac{4n^{2}K}{\delta}}{2T_{k}(n)}},
\end{align*}
where the third inequality follows from the conclusion in Lemma \ref{lemma 5} and the last
inequality follows from the fact that $|\overline{x}_{k,\,T_{k}(n)}-\mu_{k}|\leq M$. So, we have
\begin{equation*}
-M[(p+1)\sqrt{\frac{\log\frac{4n^{2}K}{\delta}}{2T_{k}(n)}}]^{\frac{1}{p}}\leq(\frac{1}{T_{k}(n)}\sum_{t=1}^{T_{k}(n)}|x_{k,\,t}-\overline{x}_{k,\,T_{k}(n)}|^{p})^{\frac{1}{p}}-\Vert X_{k}\Vert _{p}\leq M[(p+1)\sqrt{\frac{\log\frac{4n^{2}K}{\delta}}{2T_{k}(n)}}]^{\frac{1}{p}}.
\end{equation*}
From the three relations above, we see that the second term of $\widehat{\rho}_{k,\,n}^{M}$ is also bounded. Suppose that arm $k\neq k^{*}$ is pulled at time $n+1$ by Algorithm 1, then the following relation must hold
\begin{align*}
\rho_{k}^{M}-2M\sqrt{\frac{\log\frac{4n^{2}K}{\delta}}{T_{k}\left(n\right)}}-2M\left[\left(p+1\right)\sqrt{\frac{\log\frac{4n^{2}K}{\delta}}{2T_{k}\left(n\right)}}\right]^{\frac{1}{p}} & \leq B_{k^{*},\,n}^{M}\leq B_{k,\,n}^{M} \leq \rho_{k^{*}}^{M},
\end{align*}
and so
\begin{equation*}
\rho_{k}^{M}-\rho_{k^{*}}^{M}
\leq 2M\sqrt{\frac{\log\frac{4n^{2}K}{\delta}}{T_{k}\left(n\right)}}+2M\left[\left(p+1\right)\sqrt{\frac{\log\frac{4n^{2}K}{\delta}}{2T_{k}\left(n\right)}}\right]^{\frac{1}{p}}.
\end{equation*}
As in the proof of Lemma \ref{lemma 4}, conclusion (\ref{lemma 9-1}) follows by substituting $x=\sqrt{{T_{k}^{-1}(n)}{\log({4n^{2}K}/{\delta})}}$, $a=2M$, $b=2M\left(p+1\right)^{\frac{1}{p}}$ and $c=\rho_{k}^{M}-\rho_{k^{*}}^{M}$
in Lemma \ref{lemma 8}. Conclusion (\ref{lemma 9-2}) directly follows by noting $T_{k}(n)$
is automatically bounded by $n$.
\hfill\rule{1.5mm}{3mm}
\end{proof}

\noindent\textbf{Step 3: Establishing an upper bound on the pseudo regret}. Note that the pseudo regret consists of two parts: the differences between means and deviations. Based on the previous two steps, we can bound each of these two parts. The derivation of the upper bound for the pseudo regret $R_{n}^{M}(\varphi)$ in Theorem 2 is given below.
\begin{proof}[\textup{\textbf{Proof of Theorem \ref{MD theorem}}}]
We first decompose the pseudo regret as follows
\begin{align}\label{MD theorem 1}
R_{n}^{M}(\varphi)
& =\left[E\left(\overline{x}_{\varphi,\,n}\right)-\mu_{k^{*}}\right]+\gamma\left[\mathbf{E}\left(\frac{1}{n}\sum_{t=1}^{n}\left|x_{I_{t},\,t}-\overline{x}_{\varphi,\,n}\right|^{p}\right)^{\frac{1}{p}}-\left(\mathbf{E}\left|X_{k^{*}}-\mu_{k^{*}}\right|^{p}\right)^{\frac{1}{p}}\right]\nonumber \\
\leq & \frac{1}{n}\sum_{k\neq k^{*}}\mathbf{E}T_{k}(n)\Delta_{k}^{MD}+\gamma\left|\left(\frac{1}{n}\sum_{t=1}^{n}\mathbf{E}\left|x_{I_{t},\,t}-\overline{x}_{\varphi,\,n}\right|^{p}\right)^{\frac{1}{p}}-\left(\mathbf{E}\left|X_{k^{*}}-\mu_{k^{*}}\right|^{p}\right)^{\frac{1}{p}}\right|\nonumber \\
\leq & \frac{1}{n}\sum_{k\neq k^{*}}\mathbf{E}T_{k}(n)\Delta_{k}^{MD}+\frac{\gamma}{n}\sum_{t=1}^{n}\left|\mathbf{E}\left|x_{I_{t},\,t}-\overline{x}_{\varphi,\,n}\right|^{p}-\mathbf{E}\left|X_{k^{*}}-\mu_{k^{*}}\right|^{p}\right|\nonumber \\
\leq & \frac{1}{n}\sum_{k\neq k^{*}}\mathbf{E}T_{k}(n)\Delta_{k}^{MD}+\frac{\gamma}{n}\sum_{k=1}^{K}\mathbf{E}T_{k}(n)\left|\mathbf{E}\left|x_{k,\,t}-\overline{x}_{\varphi,\,n}\right|^{p}-\mathbf{E}\left|X_{k^{*}}-\mu_{k^{*}}\right|^{p}\right|\nonumber \\
= & \frac{1}{n}\sum_{k\neq k^{*}}\mathbf{E}T_{k}(n)\Delta_{k}^{MD}+\frac{\gamma}{n}\sum_{k\neq k^{*}}\mathbf{E}T_{k}(n)\left|\mathbf{E}\left|x_{k,\,t}-\overline{x}_{\varphi,\,n}\right|^{p}-\mathbf{E}\left|X_{k^{*}}-\mu_{k^{*}}\right|^{p}\right|\nonumber \\
& +\frac{\gamma\mathbf{E}T_{k^{*}}\left(n\right)}{n}\left|\mathbf{E}\left|X_{k^{*}}-\overline{x}_{\varphi,\,n}\right|^{p}-\mathbf{E}\left|X_{k^{*}}-\mu_{k^{*}}\right|^{p}\right|,
\end{align}
where the second inequality comes from Lemma \ref{lemma 6}. Next, we bound the last two terms in (\ref{MD theorem 1}). The second term in (\ref{MD theorem 1}) is bounded by
\begin{align*}
\frac{\gamma}{n}\sum_{k\neq k^{*}}\mathbf{E}T_{k}(n)\left|\mathbf{E}\left|x_{k,\,t}-\overline{x}_{\varphi,\,n}\right|^{p}-\mathbf{E}\left|X_{k^{*}}-\mu_{k^{*}}\right|^{p}\right| & \leq\frac{\gamma pM^{p}}{n}\sum_{k\neq k^{*}}\mathbf{E}T_{k}(n),
\end{align*}
due to Lemma \ref{lemma 6}. The third term in (\ref{MD theorem 1}) is bounded by
\begin{align*}
 & \frac{\gamma\mathbf{E}T_{k^{*}}\left(n\right)}{n}\left|\mathbf{E}\left|X_{k^{*}}-\overline{x}_{\varphi,\,n}\right|^{p}-\mathbf{E}\left|X_{k^{*}}-\mu_{k^{*}}\right|^{p}\right|\\
\leq & \gamma\left|\mathbf{E}\left|X_{k^{*}}-\overline{x}_{\varphi,\,n}\right|^{p}-\mathbf{E}\left|X_{k^{*}}-\mu_{k^{*}}\right|^{p}\right|\\
\leq & \gamma\mathbf{E}\left|\left|X_{k^{*}}-\overline{x}_{\varphi,\,n}\right|^{p}-\left|X_{k^{*}}-\mu_{k^{*}}\right|^{p}\right|\\
\leq & \gamma pM^{p-1}\mathbf{E}\left|\overline{x}_{\varphi,\,n}-\mu_{k^{*}}\right|\\
\leq & \gamma pM^{p-1}\mathbf{E}\left|\frac{1}{n}\sum_{k\neq k^{*}}\sum_{t=1}^{T_{k}\left(n\right)}\left(x_{k,\,t}-\mu_{k^{*}}\right)\right|+\gamma pM^{p-1}\mathbf{E}\left|\frac{1}{n}\sum_{t=1}^{T_{k^{*}}\left(n\right)}\left(x_{k^{*},\,t}-\mu_{k^{*}}\right)\right|\\
\leq & \gamma pM^{p-1}\frac{1}{n}\sum_{k\neq k^{*}}\mathbf{E}T_{k}(n)\mathbf{E}\left|x_{k,\,t}-\mu_{k^{*}}\right|+\gamma pM^{p-1}\mathbf{E}\left|\frac{1}{T_{k^{*}}\left(n\right)}\sum_{t=1}^{T_{k^{*}}\left(n\right)}\left(x_{k^{*},\,t}-\mu_{k^{*}}\right)\right|.
\end{align*}
Furthermore, the first term above is bounded by $\frac{\gamma pM^{p}}{n}\sum_{k\neq k^{*}}\mathbf{E}T_{k}(n)$.
Restricted to the subset $\Omega_{n}^{M}$, the second term above satisfies the bound
\begin{align*}
\left|\frac{1}{T_{k^{*}}\left(n\right)}\sum_{t=1}^{T_{k^{*}}\left(n\right)}\left(x_{k^{*},\,t}-\mu_{k^{*}}\right)\right|=\left|\overline{x}_{k^{*},\,T_{k^{*}}\left(n\right)}-\mu_{k^{*}}\right| & \leq M\sqrt{\frac{\log\frac{4n^{2}K}{\delta}}{2T_{k^{*}}\left(n\right)}}.
\end{align*}
Meanwhile, $|\overline{x}_{k^{*},\,T_{k^{*}}(n)}-\mu_{k^{*}}|$ is bounded by $M$, and so
\begin{equation*}
\mathbf{E}\left|\frac{1}{T_{k^{*}}\left(n\right)}\sum_{t=1}^{T_{k^{*}}\left(n\right)}\left(x_{k^{*},\,t}-\mu_{k^{*}}\right)\right|\leq\left(1-\frac{\delta}{n}\right)M\sqrt{\frac{\log\frac{4n^{2}K}{\delta}}{2T_{k^{*}}\left(n\right)}}+\frac{\delta}{n}M.
\end{equation*}
From this reasoning, we see that both the last two terms in (\ref{MD theorem 1}) are bounded which gives the desired conclusion.
\hfill\rule{1.5mm}{3mm}
\end{proof}

\subsection{Shortfall risk measure}\label{Shortfall risk measure proof}

In Lemmas \ref{lemma 10}-\ref{lemma 12}, we first establish that
$\widehat{\rho}_{k,\,n}^{S}$ is an M estimator [\citep{dasgupta2008asymptotic} Definition 17.1] of $\rho_{k}^{S}$. Then we derive an upper bound for $|\widehat{\rho}_{k,\,n}^{S}-\rho_{k}^{S}|$, based on which, we establish the upper bound on the number of pulls of each sub-optimal arm. Then we prove Theorem \ref{shortfall theorem} in the end of the section.

\noindent\textbf{Step 1: Bounding} $|\widehat{\rho}_{k,\,n}^{S}-\rho_{k}^{S}|$ \textbf{by the concentration results for M estimators}. Let $G_{k}(\kappa)\triangleq\mathbf{E}[l(X_{k}-\kappa)]$ for notational convenience, based on Assumption \ref{shortfall assumption 1}, we conclude that $G_{k}(\kappa)$ obeys the following property.
\begin{lemmastar}\label{lemma 10}
The derivative of $G_{k}(\kappa)$, denoted by $G_{k}^{'}(\kappa)$, exists in $(0,\:M)$
and $[G_{k}^{'}(SF_{k})]^{-1}$ is upper bounded by a constant $M_{G}$, for all $k=1,\ldots,K$.
\end{lemmastar}
\noindent The following lemma establishes that $\widehat{\rho}_{k,\,n}^{S}$ is an M
estimator and converges to $\rho_{k}^{S}$ a.s.
\begin{lemmastar}\label{lemma 11}
Suppose Assumption \ref{shortfall assumption 1} holds, $\widehat{\rho}_{k,\,n}^{S}$ is the unique solution in $\kappa$ of
\begin{equation*}
\int_{0}^{M}l(X-\kappa)dF_{i,\,n}(X)=0.
\end{equation*}
Then $\widehat{\rho}_{k,\,n}^{S}$ is an M estimator of $\rho_{k}^{S}$. Further,
$\rho_{k}^{S}$ is the unique solution of $G_{k}(\kappa)=0$,
and $\widehat{\rho}_{k,\,n}^{S} \overset{a.s.}{\rightarrow} \rho_{k}^{S}$.
\end{lemmastar}
Based on the conclusion in Lemma \ref{lemma 11}, we may obtain the Bahadur representation for $|\widehat{\rho}_{k,\,n}^{S}-\rho_{k}^{S}|$. Subsequently, we may establish the confidence bound on $\widehat{\rho}_{k,\,n}^{S}$ which is given in the following lemma.
\begin{lemma}\label{lemma 12}
For all $n\geq 1$, the following bound on $|\widehat{\rho}_{k,\,n}^{S}-\rho_{k}^{S}|$ holds with probability at least $1-\delta$
\begin{equation}\label{lemma 12-1}
|\widehat{\rho}_{k,\,n}^{S}-\rho_{k}^{S}| \leq
2M_{l}M_{G}\sqrt{\frac{\log\frac{4}{\delta}}{2n}}.
\end{equation}
Furthermore,
\begin{equation}\label{lemma 12-2}
\mathbf{E}\left(|\widehat{\rho}_{k,\,n}^{S}-\rho_{k}^{S}|\right)\leq2M_{l}M_{G}\sqrt{\frac{\log\frac{4}{\delta}}{2n}}+\delta M.
\end{equation}
\end{lemma}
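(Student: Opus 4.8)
The plan is to mirror the proof of Lemma \ref{lemma 2}, replacing the Bahadur representation of sample quantiles by the asymptotic linearity of M-estimators. By Lemma \ref{lemma 11}, $\widehat{\rho}_{k,\,n}^{S}$ is the unique root in $\kappa$ of $\frac{1}{n}\sum_{t=1}^{n}l(x_{k,\,t}-\kappa)=0$, while $\rho_{k}^{S}$ is the unique root of $G_{k}(\kappa)=\mathbf{E}[l(X_{k}-\kappa)]=0$. Writing $\psi(x,\kappa)=l(x-\kappa)$, Assumption \ref{shortfall assumption 1} supplies exactly the ingredients of the M-estimator expansion: $\psi(\cdot,\kappa)$ is Lipschitz in $\kappa$ uniformly in $x$ (constant $C_{l}$), $\psi(X_{k},\rho_{k}^{S})$ is bounded hence square-integrable, $G_{k}$ is differentiable at $\rho_{k}^{S}$ with $G_{k}^{'}(\rho_{k}^{S})\neq0$ by Lemma \ref{lemma 10}, and $\widehat{\rho}_{k,\,n}^{S}\overset{a.s.}{\to}\rho_{k}^{S}$ by Lemma \ref{lemma 11}. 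Hence the Bahadur-type representation for M-estimators [\citep{dasgupta2008asymptotic}, Chapter 17] yields
\begin{equation*}
\widehat{\rho}_{k,\,n}^{S}-\rho_{k}^{S}=-\frac{1}{G_{k}^{'}(\rho_{k}^{S})}\cdot\frac{1}{n}\sum_{t=1}^{n}l(x_{k,\,t}-\rho_{k}^{S})+r_{n},\qquad r_{n}=o_{p}(n^{-1/2}).
\end{equation*}

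Next I would bound the two pieces. By Lemma \ref{lemma 10}, $1/|G_{k}^{'}(\rho_{k}^{S})|\leq M_{G}$. The summands $l(x_{k,\,t}-\rho_{k}^{S})$ are i.i.d.\ with mean $G_{k}(\rho_{k}^{S})=0$, and since $x_{k,\,t}\in[0,\,M]$ and $\rho_{k}^{S}\in[0,\,M]$ (both guaranteed by Assumption \ref{shortfall assumption 1}), each $|l(x_{k,\,t}-\rho_{k}^{S})|$ is bounded by $M_{l}$. Hoeffding's inequality then gives, at a threshold of order $M_{l}\sqrt{\log(4/\delta)/(2n)}$, that $|\frac{1}{n}\sum_{t=1}^{n}l(x_{k,\,t}-\rho_{k}^{S})|$ exceeds it with probability at most $\delta/2$. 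As in the proof of Lemma \ref{lemma 2}, I would split the target bound $2M_{l}M_{G}\sqrt{\log(4/\delta)/(2n)}$ into the Hoeffding contribution and a slack of the same order for $r_{n}$; since $r_{n}=o_{p}(n^{-1/2})$, the probability that $|r_{n}|$ exceeds that slack is $o(1)\leq1$, so the union bound gives $(1+o(1))\delta/2\leq\delta$, which establishes \eqref{lemma 12-1}.

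Finally, \eqref{lemma 12-2} follows from \eqref{lemma 12-1} exactly as \eqref{lemma 2-3} follows from \eqref{lemma 2-2}: on the complement event (probability at most $\delta$) both $\widehat{\rho}_{k,\,n}^{S}$ and $\rho_{k}^{S}$ lie in $[0,\,M]$, so $|\widehat{\rho}_{k,\,n}^{S}-\rho_{k}^{S}|\leq M$ there, and taking expectations yields $\mathbf{E}|\widehat{\rho}_{k,\,n}^{S}-\rho_{k}^{S}|\leq 2M_{l}M_{G}\sqrt{\log(4/\delta)/(2n)}+\delta M$.

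I expect the main obstacle to be the rigorous justification of the M-estimator expansion and the control of $r_{n}=o_{p}(n^{-1/2})$: one must verify that Assumption \ref{shortfall assumption 1} — in particular the Lipschitz property of $l$ and the uniform lower bound $m_{l}$ on $l^{'}$, which via Lemma \ref{lemma 10} keeps $G_{k}^{'}(\rho_{k}^{S})$ bounded away from $0$ — delivers the stochastic equicontinuity and non-degeneracy required for the expansion, and that the ``for all $n\geq1$'' statement is read, as in Lemma \ref{lemma 2}, up to absorbing an $o(1)$ factor that is eventually below $1$. The Hoeffding step and the truncation argument for the expectation are routine.
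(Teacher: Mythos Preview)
Your proposal is correct and follows essentially the same route as the paper's proof: invoke the Bahadur-type expansion for M-estimators (the paper cites \citep{dasgupta2008asymptotic}, Theorem 17.3), apply Hoeffding's inequality to the centered, bounded summands $l(x_{k,\,t}-\rho_{k}^{S})/G_{k}^{'}(\rho_{k}^{S})$, absorb the $o_{p}(n^{-1/2})$ remainder into the factor of $2$ via the same $(1+o(1))\delta/2\leq\delta$ device used in Lemma \ref{lemma 2}, and obtain \eqref{lemma 12-2} from \eqref{lemma 12-1} by the crude bound $|\widehat{\rho}_{k,\,n}^{S}-\rho_{k}^{S}|\leq M$ on the exceptional set. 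Your added discussion of which parts of Assumption \ref{shortfall assumption 1} feed into the expansion hypotheses is more explicit than the paper's, but the argument is the same.
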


\begin{proof}
From the conclusion of Lemma \ref{lemma 11}, $\widehat{\rho}_{k,\,n}^{S}$ is an M estimator that converges
to $\rho_{k}^{S}$. The Bahadur representation for
$\rho_{k}^{S}-\widehat{\rho}_{k,\,n}^{S}$ is given as below [\citep{dasgupta2008asymptotic} Theorem 17.3]
\begin{equation*}
\rho_{k}^{S}-\widehat{\rho}_{k,\,n}^{S}
=\frac{1}{n}\sum_{t=1}^{n}\frac{l\left(x_{k,\,t}-\rho_{k}^{S}\right)}{G_{k}^{'}\left(\rho_{k}^{S}\right)}+r_{n},
\end{equation*}
where $r_{n}=o_{p}(1/\sqrt{n})$. Noting that $\mathbf{E}[l(x_{k,\,t}-\rho_{k}^{S})]=0$
and each $l(x_{k,\,t}-\rho_{k}^{S})[G_{k}^{'}(\rho_{k}^{S})]^{-1}$
is bounded by $M_{l}M_{G}$, by Hoeffding's inequality we have
\begin{equation*}
P\left\{ \left|\frac{1}{n}\sum_{t=1}^{n}\frac{l\left(x_{k,\,t}-\rho_{k}^{S}\right)}{G_{k}^{'}\left(\rho_{k}^{S}\right)}\right|\geq M_{l}M_{G}\sqrt{\frac{\log\frac{4}{\delta}}{2n}}\right\} \leq\frac{\delta}{2}.
\end{equation*}
Then, we have the following bound on the probability that
$|\widehat{\rho}_{k,\,n}^{S}-\rho_{k}^{S}|$ exceeds a certain threshold
\begin{equation*}
P\left\{ \left|\widehat{\rho}_{k,\,n}^{S}-\rho_{k}^{S}\right|\geq2M_{l}M_{G}\sqrt{\frac{\log\frac{4}{\delta}}{2n}}\right\} \leq\delta.
\end{equation*}
Conclusion (\ref{lemma 12-1}) follows from this bound. As
$\widehat{\rho}_{k,\,n}^{S}\in\left[0,\:M\right]$
and $\rho_{k}^{S} \in \left[0,\:M\right]$, we know that
$\left|\widehat{\rho}_{k,\,n}^{S}-\rho_{k}^{S}\right|\leq M$, and so
conclusion (\ref{lemma 12-2}) follows.
\hfill\rule{1.5mm}{3mm}
\end{proof}
Lemma \ref{lemma 12} establishes an probability upper bound on
$|\widehat{\rho}_{k,\,n}^{S}-\rho_{k}^{S}|$,
based on which, we can establish the index policy for the shortfall case.

\noindent\textbf{Step 2: Bounding the number of pulls of each sub-optimal arm}. The following lemma gives bounds on $T_{k}(n)$ and $\mathbf{E}T_{i}(n)$ in probability. We define the following key subset
\begin{equation*}
\Omega_{n}^{S}\triangleq\left\{ \forall\:k=1,\ldots,K,\;\forall t=1,\ldots,n,\:s.t.\:|\widehat{\rho}_{k,\,n}^{S}-\rho_{k}^{S}|\leq2M_{l}M_{G}\sqrt{\frac{\log\frac{4n^{2}K}{\delta}}{2t}}\right\}.
\end{equation*}
As before, subset $\Omega_{n}^{S}$ collects all the realizations where the empirical risk of each arm is close enough to its real risk at each of the first $n$ times. The following lemma gives a lower bound on the probability of $\Omega_{n}^{S}$.
\begin{lemmastar}\label{lemma 13}
For all $n\geq 1$, we have $\mathbf{P}\{\Omega_{n}^{S}\} \geq1-{\delta}/{n}$.
\end{lemmastar}
\noindent Through the proofs of Lemma \ref{lemma 12} and Lemma \ref{lemma 13}, we have shown that $\varepsilon_{\rho}^{S}$ satisfies Condition 1. In the following lemma, we establish an upper bound on the number of pulls of sub-optimal arms.
\begin{lemma}\label{lemma 14}
For any sub-optimal arm $k\neq k^{*}$,
the following relation holds with probability at least $1-{\delta}/{n}$
\begin{equation}\label{lemma 14-1}
T_{k}\left(n\right)\leq\frac{8M_{l}M_{G}}{\Delta_{k}^{S}}\log\frac{4n^{2}K}{\delta}.
\end{equation}
Furthermore,
\begin{equation}\label{lemma 14-2}
\mathbf{E}T_{k}\left(n\right)\leq\left(1-\frac{\delta}{n}\right)\frac{8M_{l}M_{G}}{\Delta_{k}^{S}}\log\frac{4n^{2}K}{\delta}+\delta.
\end{equation}
Specifically, $\mathbf{E}T_{k}(n)\leq O(\log n)$.
\end{lemma}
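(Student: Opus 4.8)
The plan is to follow the same template as the proof of Lemma~\ref{lemma 4}, now built on the concentration bound for the shortfall $M$-estimator from Lemma~\ref{lemma 12} together with its uniform version in Lemma~\ref{lemma 13}. Throughout I would work on the ``good'' event $\Omega_n^S$, which by Lemma~\ref{lemma 13} has probability at least $1-\delta/n$; on this event every arm $k$ and every $t\le n$ satisfy $|\widehat{\rho}_{k,t}^S-\rho_k^S|\le 2M_lM_G\sqrt{\log(4n^2K/\delta)/(2t)}=\varepsilon_\rho^S(n,t,K,\delta)$.

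First I would record a two-sided bound on the index. Since $B_{k,n}^S=\widehat{\rho}_{k,n}^S-\varepsilon_\rho^S(n,T_k(n),K,\delta)$, the concentration bound on $\Omega_n^S$ applied with $t=T_k(n)$ gives, for every arm $k$ (and in particular for $k^*$),
\[
\rho_k^S-2\,\varepsilon_\rho^S(n,T_k(n),K,\delta)\;\le\;B_{k,n}^S\;\le\;\rho_k^S .
\]
Next comes the selection step: if arm $k\neq k^*$ is chosen at time $n+1$ then $B_{k,n}^S\le B_{k^*,n}^S$, so combining the lower bound for $B_{k,n}^S$ with the upper bound $B_{k^*,n}^S\le\rho_{k^*}^S$ yields
\[
\Delta_k^S=\rho_k^S-\rho_{k^*}^S\;\le\;2\,\varepsilon_\rho^S(n,T_k(n),K,\delta)=4M_lM_G\sqrt{\frac{\log\frac{4n^2K}{\delta}}{2\,T_k(n)}} .
\]
Squaring and rearranging this inequality solves for $T_k(n)$ and gives (\ref{lemma 14-1}), exactly as in Lemma~\ref{lemma 4}; to make this rigorous one should invoke the selection inequality at the last time $\le n$ that arm $k$ was pulled, so that it controls $T_k(n)-1$ and hence $T_k(n)$.

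Finally, for (\ref{lemma 14-2}) I would split on $\Omega_n^S$ and its complement: on $\Omega_n^S$ (probability $\ge 1-\delta/n$) use (\ref{lemma 14-1}), and on the complement (probability $\le\delta/n$) use the trivial bound $T_k(n)\le n$, noting $(\delta/n)\cdot n=\delta$. The statement $\mathbf{E}T_k(n)\le O(\log n)$ is then immediate since $\log(4n^2K/\delta)=O(\log n)$.

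The main obstacle, as in Lemma~\ref{lemma 4}, is not the algebra but making the uniform-in-$t$ argument precise: $T_k(n)$ is itself random, so one cannot simply apply Lemma~\ref{lemma 12} at a single value of $t$; this is exactly why $\Omega_n^S$ is defined with a union over all $t\le n$ and all $K$ arms, and why the selection inequality must be invoked at the appropriate random time step. Once the two-sided index bound holds simultaneously for all $t$ on $\Omega_n^S$, the remainder is routine.
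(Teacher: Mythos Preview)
Your proposal is correct and follows essentially the same approach as the paper: work on $\Omega_n^S$ (Lemma~\ref{lemma 13}), derive the two-sided bound $\rho_k^S-2\varepsilon_\rho^S\le B_{k,n}^S\le\rho_k^S$, compare indices at the time a sub-optimal arm is pulled to get $\Delta_k^S\le 4M_lM_G\sqrt{\log(4n^2K/\delta)/(2T_k(n))}$, and then solve for $T_k(n)$; the expectation bound is obtained by the same split on $\Omega_n^S$ and its complement. Your remark about invoking the selection inequality at the last pull of arm $k$ and about the need for uniformity in $t$ is a useful clarification that the paper leaves implicit.
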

\begin{proof}
The main idea of the proof is the same as Lemma \ref{lemma 4}. As
$|\widehat{\rho}_{k,\,n}^{S}-\rho_{k}^{S}|$ has been bounded in Lemma \ref{lemma 12}, we can directly establish an upper bound on the number of pulls of each sub-optimal arm in probability using the relation of indices between each each sub-optimal arm and the optimal arm. Restricted to the set $\Omega_{n}^{S}$, the following inequality holds for each arm, including the optimal arm $k^{*}$
\begin{equation*}
-2M_{l}M_{G}\sqrt{\frac{\log\frac{4n^{2}K}{\delta}}{2T_{k}(n)}}
\leq \widehat{\rho}_{k,\,n}^{S}-\rho_{k}^{S}
\leq 2M_{l}M_{G}\sqrt{\frac{\log\frac{4n^{2}K}{\delta}}{2T_{k}(n)}}.
\end{equation*}
At time $n+1$, if any sub-optimal arm $k\neq k^{*}$ is pulled, then we must have
\begin{align*}
\rho_{k}^{S}-4M_{l}M_{G}\sqrt{\frac{\log\frac{4n^{2}K}{\delta}}{2T_{k}\left(n\right)}}\leq B_{k,\,n}^{S} & \leq B_{k^{*},\,n}^{S}\leq \rho_{k^{*}}^{S}.
\end{align*}
Conclusion (\ref{lemma 14-1}) then follows directly from the above inequality
and conclusion (\ref{lemma 14-2}) follows by noting that $T_{k}(n)$ is automatically bounded by $n$.
\hfill\rule{1.5mm}{3mm}
\end{proof}

\noindent\textbf{Step 3: Establishing an upper bound on the pseudo regret}. Based on  previous results we have upper bounds both on
$|\widehat{\rho}_{k,\,n}^{S}-\rho_{k}^{S}|$ and the number of pulls of sub-optimal arms. We may now give the full derivation of the upper bound on the pseudo regret in Theorem 3.
\begin{proof}[\textup{\textbf{Proof of Theorem \ref{shortfall theorem}}}]
First, we decompose and bound $|\widehat{\rho}_{\varphi,\,n}^{S}-\rho_{k^{*}}^{S}|$ with
\begin{equation}\label{Shortfall theorem 2}
\left|\widehat{\rho}_{\varphi,\,n}^{S}-\rho_{k^{*}}^{S}\right|
\leq \left|\widehat{\rho}_{k^{*},\,n}^{S}-\widehat{\rho}_{\varphi,\,n}^{S}\right|
+ \left|\widehat{\rho}_{k^{*},\,n}^{S}-\rho_{k^{*}}^{S}\right|.
\end{equation}
In the rest of the proof, we will bound both terms in the right side of (\ref{Shortfall theorem 2}).  Let $G_{k,\,n}(\kappa) \triangleq \frac{1}{n}\sum_{t=1}^{n}l(x_{k,\,t}-\kappa)$ and $G_{\varphi,\,n}(\kappa) \triangleq \frac{1}{n}\sum_{t=1}^{n}l(x_{I_{t},\,t}-\kappa)$.
We first bound the term
$|\widehat{\rho}_{k^{*},\,n}^{S}-\widehat{\rho}_{\varphi,\,n}^{S}|$ in (\ref{Shortfall theorem 2}). Based on Assumption \ref{shortfall assumption 1}, $G_{\varphi,\,n}(\kappa)$
is monotone decreasing in $\kappa$ and $G_{\varphi,\,n}^{'}\left(\kappa\right)$
exists in $[0,\:M]$ and is upper bounded by $-m_{l}$. By the differential mean value theorem, there exists an $\xi_{n}$
between $\widehat{\rho}_{k^{*},\,n}^{S}$ and $\widehat{\rho}_{\varphi,\,n}^{S}$ that satisfies
\begin{equation*}
\left|G_{\varphi,\,n}\left(\widehat{\rho}_{k^{*},\,n}^{S}\right)-G_{\varphi,\,n}\left(\widehat{\rho}_{\varphi,\,n}^{S}\right)\right|=\left|G_{\varphi,\,n}^{'}\left(\xi_{n}\right)\left(\widehat{\rho}_{k^{*},\,n}^{S}-\widehat{\rho}_{\varphi,\,n}^{S}\right)\right|,
\end{equation*}
so we have
\begin{equation*}
\left|\widehat{\rho}_{k^{*},\,n}^{S}-\widehat{\rho}_{\varphi,\,n}^{S}\right| \leq \frac{1}{m_{l}}\left|G_{\varphi,\,n}\left(\widehat{\rho}_{k^{*},\,n}^{S}\right)-G_{\varphi,\,n}\left(\widehat{\rho}_{\varphi,\,n}^{S}\right)\right|.
\end{equation*}
From Lemma \ref{lemma 11}, we know that $\widehat{\rho}_{\varphi,\,n}^{S}$
is the unique solution of $G_{\varphi,\,n}(\kappa)$, which
means that $G_{\varphi,\,n}(\widehat{\rho}_{\varphi,\,n}^{S})=0$.
Restricted to the subset $\Omega_{n}^{S}$, we have
\begin{align*}
\left|G_{\varphi,\,n}\left(\widehat{\rho}_{k^{*},\,n}^{S}\right)\right|
&  =\left|\frac{1}{n}\sum_{t=1}^{n}l\left(x_{I_{t},\,t}-\widehat{\rho}_{k^{*},\,n}^{S}\right)\right| \\
& =\left|\frac{1}{n}\sum_{t=1}^{T_{k^{*}}\left(n\right)}l\left(x_{k^{*},\,t}-\widehat{\rho}_{k^{*},\,n}^{S}\right)+\frac{1}{n}\sum_{k\neq k^{*}}\sum_{t=1}^{T_{k}\left(n\right)}l\left(x_{k,\,t}-\widehat{\rho}_{k^{*},\,n}^{S}\right)\right| \\
& =\left|\frac{T_{k^{*}}\left(n\right)}{n}G_{k^{*},\,T_{k^{*}}\left(n\right)}\left(\widehat{\rho}_{k^{*},\,n}^{S}\right)+\frac{1}{n}\sum_{k\neq k^{*}}\sum_{t=1}^{T_{k}\left(n\right)}l\left(x_{k,\,t}-\widehat{\rho}_{k^{*},\,n}^{S}\right)\right| \\
& \leq\frac{\sum_{k\neq k^{*}}T_{k}\left(n\right)}{n}M_{l} \\
& \leq\sum_{k\neq k^{*}}\frac{8M_{l}^{2}M_{G}}{n\Delta_{k}^{S}}\log\frac{4n^{2}K}{\delta},
\end{align*}
where the first inequality follows by noting that $l$ is bounded
by $M_{l}$. The above inequality implies
\begin{equation*}
\left|G_{\varphi,\,n}\left(\widehat{\rho}_{k^{*},\,n}^{S}\right)-G_{\varphi,\,n}\left(\widehat{\rho}_{\varphi,\,n}^{S}\right)\right|\leq\sum_{k\neq k^{*}}\frac{8M_{l}^{2}M_{G}}{n\Delta_{k}^{S}}\log\frac{4n^{2}K}{\delta},
\end{equation*}
and so $|\widehat{\rho}_{k^{*},\,n}^{S}-\widehat{\rho}_{\varphi,\,n}^{S}|$ is bounded by
\begin{equation*}
\left|\widehat{\rho}_{k^{*},\,n}^{S}-\widehat{\rho}_{\varphi,\,n}^{S}\right|
\leq \sum_{k\neq k^{*}}\frac{8M_{l}^{2}M_{G}}{nm_{l}\Delta_{k}^{S}}\log\frac{4n^{2}K}{\delta}.
\end{equation*}
Meanwhile, restricted to the event $\Omega_{n}^{S}$, the second term in (\ref{Shortfall theorem 2}) is bounded by
\begin{equation*}
\left|\widehat{\rho}_{k^{*},\,n}^{S}-\rho_{k^{*}}^{S}\right|\leq2M_{l}M_{G}\sqrt{\frac{\log\frac{4n^{2}K}{\delta}}{2T_{k}\left(n\right)}}.
\end{equation*}
Finally, we have
\begin{align*}
\left|\widehat{\rho}_{\varphi,\,n}^{S}-\rho_{k^{*}}^{S}\right|
& \leq \left|\widehat{\rho}_{k^{*},\,n}^{S}-\widehat{\rho}_{\varphi,\,n}^{S}\right|
+ \left|\widehat{\rho}_{k^{*},\,n}^{S}-\rho_{k^{*}}^{S}\right| \\
 & \leq\sum_{k\neq k^{*}}\frac{8M_{l}^{2}M_{G}}{nm_{l}\Delta_{k}^{S}}\log\frac{4n^{2}K}{\delta}+2M_{l}M_{G}\sqrt{\frac{\log\frac{4n^{2}K}{\delta}}{2T_{k}\left(n\right)}}.
\end{align*}
As for the proofs of Theorem \ref{CVaR theorem} and \ref{MD theorem}, the conclusion
follows by noting that both $\widehat{\rho}_{\varphi,\,n}^{S}$ and $\rho_{k^{*}}^{S}$
lie in $\left[0,\:M\right]$.
\hfill\rule{1.5mm}{3mm}
\end{proof}

\section{Concluding remarks}\label{Concluding remarks}

In this work, we focus on risk-aware MAB where the objective is a coherent risk measure. We introduce three
specific risk measures which are widely investigated in the literature,
yet not studied in MAB problems. As our main contribution, we construct an index-based policy for risk-averse MAB and bound its
pseudo regret for our three specific risk measures. In particular, we show that the upper bound on the pseudo regret is of the order of $O(\sqrt{{\log n}/{n}})$ which is different from the order $O({\log n}/{n})$ in risk-neutral case. In our discussion in Section \ref{Discussion}, we show that this is because the relation between the pseudo regret and the number of pulls of each sub-optimal arm is nonlinear. Meanwhile, we notice that when the risk measures become expectations (by changing the corresponding coefficients), both orders in the risk-averse case and the risk neutral case are consistent. We note that our index policy has a simple form and is thus practical and versatile.

In future work, following the same procedure in this paper, we may extend the scope of our study to
incorporate more risk measures. Our scheme in this paper is actually quite general, and only depends on being able to obtain confidence
bounds for empirical estimation of risk measures. Moreover, in this work, we need to make specific assumptions on
the distribution of each arm as well as on risk measures themselves. For example, for CVaR, we require a continuous CDF. The possibility of relaxing these assumptions is worthy of further study.

\begin{acknowledgement}
This research was supported by MOE Tier I grant WBS R266-000-087-112
and MOE Tier I grant WBS R266-000-104-112.
\end{acknowledgement}

\section*{Appendix}\label{Appendix}

\begin{proof}[\textup{\textbf{Proof of Lemma \ref{lemma 3}}}]
By Hoeffding's inequality and noting that $M$ is
the uniform upper bound for all the arms, $\forall t=1,\ldots,n$,
we have
\begin{equation*}
\mathbf{P}\left\{ \left|\frac{1}{t}\sum_{s=1}^{t}(x_{k,\,s}-\eta_{k,\,t})_{+}-\mathbf{E}\left[(X_{k}-\eta_{k,\,t})_{+}\right]\right|\geq M\sqrt{\frac{\log\frac{2n^{2}K}{\delta}}{2t}}\right\} \leq\frac{\delta}{n^{2}K}.
\end{equation*}
Thus, by replacing the term $\left[f_{k}(\nu_{k\alpha})\right]^{-1}$
by $m\left(\alpha\right)$ in Lemma \ref{lemma 2}, we have
\begin{align*}
\mathbf{P}\left\{ \left(\Omega_{n}^{C}\right)^{c}\right\} \leq & \sum_{k=1}^{K}\sum_{t=1}^{n}\mathbf{P}\left\{ |\eta_{k,\,t}-\eta_{k}|\geq2m\left(\alpha\right)\sqrt{\frac{\log\frac{2n^{2}K}{\delta}}{2t}}\right\} \\
 & +\sum_{k=1}^{K}\sum_{t=1}^{n}\mathbf{P}\left\{ \left|\frac{1}{t}\sum_{s=1}^{t}(x_{k,\,s}-\eta_{k,\,t})_{+}-\mathbf{E}\left[(X_{k}-\eta_{k,\,t})_{+}\right]\right|\geq M\sqrt{\frac{\log\frac{2n^{2}K}{\delta}}{2t}}\right\} \\
 \leq & \frac{2\delta}{n}+\frac{\delta}{n}=\frac{3\delta}{n}.
\end{align*}
The conclusion follows.
\hfill\rule{1.5mm}{3mm}
\end{proof}

\begin{proof}[\textup{\textbf{Proof of Lemma \ref{lemma 8}}}]
When $x\geq1$, we have $c\leq ax+bx^{\frac{1}{p}}\leq\left(a+b\right)x$, so
\begin{equation*}
x\geq\max\left\{ 1,\:\frac{c}{a+b}\right\}.
\end{equation*}
When $x\leq1$, we have $c\leq ax+bx^{\frac{1}{p}}\leq\left(a+b\right)x^{\frac{1}{p}}$, so
\begin{equation*}
\min\left\{ 1,\:\left(\frac{c}{a+b}\right)^{p}\right\} \leq x\leq1.
\end{equation*}
And thus we conclude that $x\geq\min\{ 1,\:\frac{c}{a+b},\:(\frac{c}{a+b})^{p}\}$.
\hfill\rule{1.5mm}{3mm}
\end{proof}

\begin{proof}[\textup{\textbf{Proof of Lemma \ref{lemma 7}}}]
By Hoeffding's inequality, and so we have
\begin{equation*}
\mathbf{P}\left\{ |\overline{x}_{k,\,t}-\mu_{k}|\geq M\sqrt{\frac{\log\frac{4n^{2}K}{\delta}}{2t}}\right\} \leq\frac{\delta}{2n^{2}K}.
\end{equation*}
Note that $\left|x_{k,\,s}-\mu_{k}\right|^{p}$ is bounded by $M^{p}$, we have
\begin{equation*}
\mathbf{P}\left\{ \left|\frac{1}{t}\sum_{s=1}^{t}\left|x_{k,\,s}-\mu_{k}\right|^{p}-\mathbf{E}\left|X_{k}-\mu_{k}\right|^{p}\right|\geq M^{p}\sqrt{\frac{\log\frac{4n^{2}K}{\delta}}{2t}}\right\} \leq\frac{\delta}{2n^{2}K}.
\end{equation*}
The probability of the event $(\Omega_{n}^{M})^{c}$ is then bounded above by
\begin{align*}
\mathbf{P}\left\{ \left(\Omega_{n}^{M}\right)^{c}\right\} \leq & \sum_{k=1}^{K}\sum_{t=1}^{n}P\left\{ |\overline{x}_{k,\,t}-\mu_{k}|\geq M\sqrt{\frac{\log\frac{4n^{2}K}{\delta}}{2t}}\right\} \\
 & +\sum_{k=1}^{K}\sum_{t=1}^{n}P\left\{ \left|\frac{1}{t}\sum_{s=1}^{t}\left|x_{k,\,s}-\mu_{k}\right|^{p}-\mathbf{E}\left|X_{k}-\mu_{k}\right|^{p}\right|\geq M^{p}\sqrt{\frac{\log\frac{4n^{2}K}{\delta}}{2t}}\right\} \\
 \leq & \frac{\delta}{2n}+\frac{\delta}{2n}=\frac{\delta}{n}.
\end{align*}
The conclusion then follows.
\hfill\rule{1.5mm}{3mm}
\end{proof}

\begin{proof}[\textup{\textbf{Proof of Lemma \ref{lemma 10}}}]
We can directly compute $G_{k}(\kappa)$ as
\begin{align*}
G_{k}^{'}\left(\kappa\right) & =\lim_{\Delta\kappa\rightarrow0}\frac{G_{k}\left(\kappa+\Delta\kappa\right)-G_{k}\left(\kappa\right)}{\Delta\kappa}\\
 & =\lim_{\Delta\kappa\rightarrow0}\frac{\int_{0}^{M}\left[l\left(X_{k}-\kappa-\Delta\kappa\right)-l\left(X_{k}-\kappa\right)\right]dF_{k}\left(X_{k}\right)}{\Delta\kappa}\\
 & =\lim_{\Delta\kappa\rightarrow0}\frac{\int_{0}^{M}\left[l^{'}\left(X_{k}-\kappa\right)\Delta\kappa+o\left(\Delta\kappa\right)\right]dF_{k}\left(X_{k}\right)}{\Delta\kappa}\\
 & =\int_{0}^{M}l^{'}\left(X_{k}-\kappa\right)dF_{k}\left(X_{k}\right).
\end{align*}
Based on Assumption \ref{shortfall assumption 1}, $l^{'}(X_{k}-\kappa)$ is upper bounded by $C_{l}$ and is also lower bounded by noting
that $l(t)$ is strictly increasing in a closed interval $[-M,\:M]$. So, $G_{k}^{'}(\kappa)$ exists
in $(0,\:M)$ and is both upper bounded and lower bounded. The conclusion follows.
\hfill\rule{1.5mm}{3mm}
\end{proof}

\begin{proof}[\textup{\textbf{Proof of Lemma \ref{lemma 11}}}]
Based on Assumption \ref{shortfall assumption 1}, $l(x_{k,\,t}-\kappa)$ is continuous
and strictly monotone decreasing in $\kappa$ for any observation $x_{k,\,t}$.
As $X_{k}$ is bounded in $[0,\:M]$,
noting the monotonicity of $l(t)$ and $l(0)=0$, we have $l\left(x_{k,\,t}\right)\geq0$ and $l\left(x_{k,\,t}-M\right)\leq0$,
so $\widehat{\rho}_{k,\,n}^{S}$ is the unique solution of
\begin{equation*}
\frac{1}{n}\sum_{t=1}^{n}l\left(x_{i,\,t}-\kappa\right)=0,
\end{equation*}
which means $\widehat{\rho}_{k,\,n}^{S}$ is an M estimator. Similarly,
$G_{k}(\kappa)$ is continuous and strictly monotone decreasing because
\begin{equation*}
G_{i}\left(\kappa_{1}\right)-G_{i}\left(\kappa_{2}\right)=\mathbf{E}\left[l\left(X_{i}-\kappa_{1}\right)-l\left(X_{i}-\kappa_{2}\right)\right]>0
\end{equation*}
for any $0\leq\kappa_{1}<\kappa_{2}\leq M$. Let the Lipschitz constant of $l$ be $C_{l}$, then
\begin{align*}
\left|G_{k}\left(\kappa_{1}\right)-G_{k}\left(\kappa_{2}\right)\right| & =\left|\mathbf{E}\left[l\left(X_{k}-\kappa_{1}\right)\right]-\mathbf{E}\left[l\left(X_{k}-\kappa_{2}\right)\right]\right|\\
 & \leq\mathbf{E}\left|l\left(X_{k}-\kappa_{1}\right)-l\left(X_{k}-\kappa_{2}\right)\right|\\
 & \leq\mathbf{E}\left|C_{l}\left(\kappa_{1}-\kappa_{2}\right)\right|=C_{l}\left|\kappa_{1}-\kappa_{2}\right|,
\end{align*}
so $\rho_{k}^{S}$ is the solution of $G_{k}\left(\kappa\right)=0$.
From [\citep{dasgupta2008asymptotic} Theorem 17.1], we can conclude
that $\widehat{\rho}_{k,\,n}^{S} \overset{a.s.}{\rightarrow} \rho_{k}^{S}$.
\hfill\rule{1.5mm}{3mm}
\end{proof}

\begin{proof}[\textup{\textbf{Proof of Lemma \ref{lemma 13}}}]
Based on Lemma \ref{lemma 12}, we have
\begin{equation*}
\mathbf{P}\left\{ \left|\widehat{\rho}_{k,\,t}^{S}-\rho_{k}^{S}\right|
\geq 2M_{l}M_{G}\sqrt{\frac{\log\frac{4n^{2}K}{\delta}}{2t}}\right\}
\leq \frac{\delta}{n^{2}K}.
\end{equation*}
\noindent It follows that
\begin{align*}
\mathbf{P}\{(\Omega_{n}^{S})^{c}\}
& \leq\sum_{k=1}^{K}\sum_{t=1}^{n}P\left\{ \left|\widehat{\rho}_{k,\,t}^{S}-\rho_{k}^{S}\right|\geq2M_{l}M_{G}\sqrt{\frac{\log\frac{4n^{2}K}{\delta}}{2t}}\right\}\\
& \leq nK\cdot\frac{\delta}{n^{2}K}=\frac{\delta}{n},
\end{align*}
from which we conclude the proof.
\hfill\rule{1.5mm}{3mm}
\end{proof}

\bibliography{ref}

\begin{thebibliography}{26}
\newcommand{\enquote}[1]{``#1''}
\expandafter\ifx\csname natexlab\endcsname\relax\def\natexlab#1{#1}\fi

\bibitem[{Acerbi and Tasche(2002)}]{acerbi2002expected}
Acerbi, C. and Tasche, D. (2002), \enquote{Expected shortfall: a natural
  coherent alternative to value at risk,} \textit{Economic notes}, 31\penalty0
  (2), 379--388.

\bibitem[{Agrawal(1995)}]{agrawal1995sample}
Agrawal, R. (1995), \enquote{Sample mean based index policies by o (log n)
  regret for the multi-armed bandit problem,} \textit{Advances in Applied
  Probability}, 27\penalty0 (4), 1054--1078.

\bibitem[{Agrawal et~al.(1988)Agrawal, Hedge, and
  Teneketzis}]{agrawal1988asymptotically}
Agrawal, R., Hedge, M., and Teneketzis, D. (1988), \enquote{Asymptotically
  efficient adaptive allocation rules for the multiarmed bandit problem with
  switching cost,} \textit{IEEE Transactions on Automatic Control}, 33\penalty0
  (10), 899--906.

\bibitem[{Arcones et~al.(1996)}]{arcones1996bahadur}
Arcones, M.~A. et~al. (1996), \enquote{The Bahadur-Kiefer representation for
  U-quantiles,} \textit{The Annals of Statistics}, 24\penalty0 (3), 1400--1422.

\bibitem[{Auer et~al.(2002)Auer, Cesa-Bianchi, and Fischer}]{auer2002finite}
Auer, P., Cesa-Bianchi, N., and Fischer, P. (2002), \enquote{Finite-time
  analysis of the multiarmed bandit problem,} \textit{Machine learning},
  47\penalty0 (2-3), 235--256.

\bibitem[{Bahadur(1966)}]{bahadur1966note}
Bahadur, R.~R. (1966), \enquote{A note on quantiles in large samples,}
  \textit{The Annals of Mathematical Statistics}, 37\penalty0 (3), 577--580.

\bibitem[{Benati and Rizzi(2007)}]{benati2007mixed}
Benati, S. and Rizzi, R. (2007), \enquote{A mixed integer linear programming
  formulation of the optimal mean/value-at-risk portfolio problem,}
  \textit{European Journal of Operational Research}, 176\penalty0 (1),
  423--434.

\bibitem[{Brown et~al.(2012)Brown, Giorgi, and Sim}]{brown2012aspirational}
Brown, D.~B., Giorgi, E.~D., and Sim, M. (2012), \enquote{Aspirational
  preferences and their representation by risk measures,} \textit{Management
  Science}, 58\penalty0 (11), 2095--2113.

\bibitem[{Brown and Sim(2009)}]{brown2009satisficing}
Brown, D.~B. and Sim, M. (2009), \enquote{Satisficing measures for analysis of
  risky positions,} \textit{Management Science}, 55\penalty0 (1), 71--84.

\bibitem[{Bubeck et~al.(2012)Bubeck, Cesa-Bianchi, et~al.}]{bubeck2012regret}
Bubeck, S., Cesa-Bianchi, N., et~al. (2012), \enquote{Regret analysis of
  stochastic and nonstochastic multi-armed bandit problems,}
  \textit{Foundations and Trends{\textregistered} in Machine Learning},
  5\penalty0 (1), 1--122.

\bibitem[{DasGupta(2008)}]{dasgupta2008asymptotic}
DasGupta, A. (2008), \textit{Asymptotic theory of statistics and probability},
  Springer Science \& Business Media.

\bibitem[{F{\"o}llmer and Schied(2002)}]{follmer2002convex}
F{\"o}llmer, H. and Schied, A. (2002), \enquote{Convex measures of risk and
  trading constraints,} \textit{Finance and stochastics}, 6\penalty0 (4),
  429--447.

\bibitem[{Galichet et~al.(2013)Galichet, Sebag, and
  Teytaud}]{galichet2013exploration}
Galichet, N., Sebag, M., and Teytaud, O. (2013), \enquote{Exploration vs
  exploitation vs safety: Risk-aware multi-armed bandits,} in \textit{Asian
  Conference on Machine Learning}, pp. 245--260.

\bibitem[{Jennison and Turnbull(1999)}]{jennison1999group}
Jennison, C. and Turnbull, B.~W. (1999), \textit{Group sequential methods with
  applications to clinical trials}, CRC Press.

\bibitem[{Kleinberg(2005)}]{kleinberg2005nearly}
Kleinberg, R.~D. (2005), \enquote{Nearly tight bounds for the continuum-armed
  bandit problem,} in \textit{Advances in Neural Information Processing
  Systems}, pp. 697--704.

\bibitem[{Lai and Robbins(1985)}]{lai1985asymptotically}
Lai, T.~L. and Robbins, H. (1985), \enquote{Asymptotically efficient adaptive
  allocation rules,} \textit{Advances in applied mathematics}, 6\penalty0 (1),
  4--22.

\bibitem[{Liu et~al.(2017)Liu, Meskarian, and Xu}]{liu2017distributionally}
Liu, Y., Meskarian, R., and Xu, H. (2017), \enquote{Distributionally robust
  reward-risk ratio optimization with moment constraints,} \textit{SIAM Journal
  on Optimization}, 27\penalty0 (2), 957--985.

\bibitem[{Maillard(2013)}]{maillard2013robust}
Maillard, O.-A. (2013), \enquote{Robust risk-averse stochastic multi-armed
  bandits,} in \textit{International Conference on Algorithmic Learning
  Theory}, Springer, pp. 218--233.

\bibitem[{Mannor and Tsitsiklis(2011)}]{mannor2011mean}
Mannor, S. and Tsitsiklis, J.~N. (2011), \enquote{Mean-variance optimization in
  Markov decision processes,} in \textit{Proceedings of the 28th International
  Conference on International Conference on Machine Learning}, Omnipress, pp.
  177--184.

\bibitem[{Mohri and Munoz(2014)}]{mohri2014optimal}
Mohri, M. and Munoz, A. (2014), \enquote{Optimal regret minimization in
  posted-price auctions with strategic buyers,} in \textit{Advances in Neural
  Information Processing Systems}, pp. 1871--1879.

\bibitem[{Robbins(1985)}]{robbins1985some}
Robbins, H. (1985), \enquote{Some aspects of the sequential design of
  experiments,} in \textit{Herbert Robbins Selected Papers}, Springer, pp.
  169--177.

\bibitem[{Rockafellar et~al.(2000)Rockafellar, Uryasev,
  et~al.}]{rockafellar2000optimization}
Rockafellar, R.~T., Uryasev, S., et~al. (2000), \enquote{Optimization of
  conditional value-at-risk,} \textit{Journal of risk}, 2, 21--42.

\bibitem[{Ruszczy{\'n}ski and Shapiro(2006)}]{ruszczynski2006optimization}
Ruszczy{\'n}ski, A. and Shapiro, A. (2006), \enquote{Optimization of convex
  risk functions,} \textit{Mathematics of operations research}, 31\penalty0
  (3), 433--452.

\bibitem[{Sani et~al.(2012)Sani, Lazaric, and Munos}]{sani2012risk}
Sani, A., Lazaric, A., and Munos, R. (2012), \enquote{Risk-aversion in
  multi-armed bandits,} in \textit{Advances in Neural Information Processing
  Systems}, pp. 3275--3283.

\bibitem[{Scott(2015)}]{scott2015multi}
Scott, S.~L. (2015), \enquote{Multi-armed bandit experiments in the online
  service economy,} \textit{Applied Stochastic Models in Business and
  Industry}, 31\penalty0 (1), 37--45.

\bibitem[{Shen et~al.(2013)Shen, Stannat, and Obermayer}]{shen2013risk}
Shen, Y., Stannat, W., and Obermayer, K. (2013), \enquote{Risk-sensitive Markov
  control processes,} \textit{SIAM Journal on Control and Optimization},
  51\penalty0 (5), 3652--3672.

\end{thebibliography}
\end{document}